\documentclass[12pt]{amsart}

\usepackage{amssymb}
\usepackage{graphicx}
\usepackage[mathscr]{eucal}

\usepackage{enumerate, verbatim,enumitem}
\usepackage{amscd,mathabx}

\newtheorem{theorem}{Theorem}[section]
\newtheorem{lemma}[theorem]{Lemma}

\theoremstyle{definition}
\newtheorem{definition}[theorem]{Definition}

\newtheorem{corollary}{Corollary}
\newtheorem{proposition}{Proposition}

\theoremstyle{remark}
\newtheorem{remark}[theorem]{Remark}

\newcommand{\R}{\mathbb R}
\newcommand{\C}{\mathbb C}
\newcommand{\N}{\mathbb N}
\newcommand{\Q}{\mathbb Q}
\newcommand{\Z}{\mathbb Z}

\newcommand{\ci}{\widehat{\mathbb C}}
\newcommand{\ds}{\displaystyle}
\newcommand{\orb}{\mathcal O}

\newcommand{\LA}{\Lambda}

\newcommand{\pl}{\wp_{\Lambda}}

\numberwithin{equation}{section}

\begin{document}

\title[Iterated $\pl$ integer points]{Integer points for iterated Weierstrass elliptic functions}

\author[Hawkins]{Jane  Hawkins }
\address{ Department of Mathematics \\ University of North Carolina
at Chapel Hill\\ CB \#3250 \\ Chapel Hill, North Carolina
27599-3250}
 \email{jmh@math.unc.edu}

\subjclass[2020]{Primary 37F10, 14H52, 30D05}

\date{}

\begin{abstract}
Iterated Weierstrass $\wp$ functions exhibit relationships between period lattices and the dynamics.
We connect the dynamical properties
 to the occurrence of integer and rational points on associated real elliptic curves. Defining $z \in \C$ to be an integer point if both $\pl(z)$
and $\pl'(z)$ are integers, we restrict to $\pl$ when the lattice $\LA$ is real and its invariants  $g_2$ and $g_3$ are integers.
We study when an integer point lies in the Fatou set of $\pl$ and when it
lies in the Julia set, and if there is any dynamical significance to this.
Applying established results on integer solutions to cubic Diophantine equations to the dynamical properties of iterated elliptic $\wp$ functions, we show
integer points at poles of $\pl$ lie in the Julia set. Integer points can also occur in the Fatou set of $\pl$.
\end{abstract}

\maketitle

 \section{Preliminary discussion}

   An elliptic function is a doubly periodic meromorphic function in $\C$ with a period lattice $\Lambda$.   The dynamics of iterated elliptic functions fit into the landscape of complex dynamics of meromorphic functions, worked on by many investigators, e.g., \cite{bak,ber,dev1,hk1,koturb,rip}.
There are elliptic curves associated to the Weierstrass $\wp$ functions, namely the Lie group $\C/\Lambda$ where $\Lambda$ is a two-dimensional lattice
in $\C$.  If the lattice $\Lambda$ is real, that is, if $\overline{\LA}=\LA$, where $\overline{\
\LA}$ denotes the complex conjugate of each lattice point in $\LA$, then
by restricting $\pl$ to the real line, we have a periodic function. For certain lattices, in addition there are non-real points $z \in \C$ with real values for $\pl(z)$ and $\pl'(z)$, and define a real elliptic curve.
The purpose of this paper is to explore some connections between known dynamical properties of the iterated elliptic $\wp$ function and its associated Diophantine equation that defines a real elliptic curve.   Our point of view is different from the classical one that views the Weierstrass $\pl$ function as a 2-fold branched cover of the Riemann sphere  $\ci$  by $\C/\LA$.  From that perspective, Latt\`{e}s rational maps arise; rational maps $R$ with $J(R) =\ci$ for every choice of lattice. However, iterating $\pl$ yields many types of Julia sets, which gives our starting point.

In this paper we view an integer point on an elliptic curve as a pair $(\pl(z), \pl'(z))$ lying on a real elliptic curve; some of its properties are determined by the dynamics of the iterated function $\pl$ relative to its period lattice.   We can divide the points on the curve into Julia or Fatou points according to whether or not the underlying point $z \in J(\pl)$.

\subsection{Background notation and results} \label{sec:background}
Elliptic functions are doubly periodic meromorphic functions, and lattices determine double periods for elliptic functions.
By $\Lambda=[\lambda_{1},\lambda_{2} ] $ we denote the group
\begin{equation}\label{eqn:latticedefn}  \Lambda=  \{m \lambda_{1} + n
\lambda_{2} \colon m,n \in {\mathbb Z} \} \subset \C. \end{equation}  Assuming $\lambda_1, \lambda_2 \in \C$
are non-zero and linearly independent over $\R$, we call $\Lambda$ a {\em lattice}.
A lattice $\Lambda$ acts on $\mathbb C$ by translation, each $\omega
\in \Lambda$ inducing the transformation $\ds T_\omega:z \mapsto z+\omega.$
We denote the coset of $\C/\Lambda$ containing
$z$ by $[z]$.
A closed, connected parallelogram $Q$ is a {\em fundamental
parallelogram} for $\Lambda$ if
  \begin{enumerate}
\item  for each $z \in \mathbb C$, $Q$ contains at least one point
in the same $\Lambda$-orbit as $z$; \item no two points in the
interior of $Q$ are in the same $\Lambda$-orbit.
\end{enumerate}
 Other regions are possible that satisfy (1) and (2).
 A {\em real lattice} $\LA$ is one such that $\overline{\LA}=\LA$, where $\overline{z}$ denotes complex conjugation
 and $\overline{\LA}=\{ \overline{\lambda}\,:\, \lambda \in \LA\}$.
Real lattices with two generators are of only two types. The first type is rectangular, where $\Lambda =[a, ib]$, with $a,b$ both real, the lattice points
 forming vertices of rectangles.

 For the second type, called rhombic, generators can be chosen to be a complex conjugate pair, so $\Lambda = [a+ ib, a-ib]$, and the lattices form the vertices
 of a tiling of the plane by rhombi with vertical and horizontal diagonals.

If $\Lambda$ is real rhombic, it is called {\em horizontal} if the longer diagonal is horizontal, and {\em vertical} if the longer diagonal is vertical.

\subsection{The Weierstass $\wp$ function}
The
{\em Weierstrass elliptic function} is defined for each $z \in {\mathbb C}$, by
\begin{equation} \label{eqn:wp}  \wp_{\Lambda}(z) = \frac{1}{z^{2}} + \sum_{\lambda \in \Lambda
\setminus \{0 \}} \left(\frac{1}{(z-\lambda)^{2}} - \frac{1}{\lambda^{2}}
\right). \end{equation}
The series in (\ref{eqn:wp}) converges absolutely and uniformly in every bounded region with no lattice points.
The map $\wp_{\Lambda}$ is an even elliptic function with poles of order 2 at lattice points.
 The derivative $\pl'$ is
an odd elliptic function of order 3, also periodic with respect to
$\Lambda$, and obtained by term by term differentiation of (\ref{eqn:wp}).
A critical point $z_0$ satisfies
$\wp_{\Lambda}'(z_0) = 0$ and $\wp_{\Lambda}(z_0)$ is a critical value.  While there are infinitely
many critical points occurring at half lattice points, there are only 3 critical values.
For each lattice $\Lambda = [\lambda_1,\lambda_2]$ and $k \in \C \setminus \{0\}$, there is a
nonlinear relation between $\wp_\Lambda$ and $\wp_{k \Lambda}.$
From the series definition of $\wp$, we have the following identities:
\begin{equation}\label{eqn:homog}  \wp_{k \Lambda}( kz) = \frac {1}{k^2} \pl(z), \, \,{\rm and}\, \,
\,  \wp_{k \Lambda}'( kz) = \frac {1}{k^3} \pl'(z). \end{equation}
The homogeneity conditions lead to widely varying dynamical behavior when iterating $\pl$.  For $\Lambda$ a real lattice,
the series also gives $\ds \pl(\overline{z})=\overline{\pl(z)},$
since $\overline{\Lambda}= \Lambda$.

Using the notation from (\ref{eqn:latticedefn}),  we define  $c_1 = \lambda_1/2$, $c_2 = \lambda_2/2$, and $c_3 = \lambda_1/2 +  \lambda_2/2$ to denote the half lattice points in a fundamental parallelogram $Q$.
This is sometimes ambiguous since generators can change; however
the critical points of $\pl$ are $[c_j]$ for $j = 1,2,3$, with
$$[c_j]=\{c_j + \lambda,\, \lambda \in \Lambda\}.$$
The critical values of $\pl$ are denoted by $e_j
 = \pl(v_j)$ for $j = 1,2,3,\, v_j \in [c_j]$. They are distinct for each lattice, and they satisfy the relationship
\begin{equation}
    \label{es}
    e_{1}+ e_{2} + e_{3}=0, \; \; \; \;
    e_{1}e_{3}+e_{2}e_{3}+e_{1}e_{2}= \frac{-g_{2}}{4}, \; \; \; \;
    e_{1}e_{2}  e_{3}=\frac{g_{3}}{4},
\end{equation}
where $g_2$ and $g_3$ are as follows.   Given a lattice $\LA$, define
$ \ds  S_n = \sum_{\lambda \in \LA, \lambda \neq 0} \lambda^{-n}, $
which converges absolutely for all $n \geq 3$.  We then set
$\ds  g_2 = 60 \, S_4,$ \,  \, $ \ds g_3 =  140\, S_6, $
so that
\begin{equation}\label{eqn:mainid}  \ds \wp'(z)^2 = 4 \pl(z)^3-g_2 \pl(z) - g_3. \end{equation}
Let $\bigtriangleup = g_2^3-27g_3^2$
denote the discriminant of the cubic polynomial from (\ref{eqn:mainid}):
\begin{equation}\label{eqn:cubicpoly} p(x) = 4 x^3 - g_2 x -g_3;\end{equation}
 it satisfies $\bigtriangleup \neq 0$ since the $e_j$s are distinct.  We see from (\ref{eqn:mainid}) and (\ref{eqn:cubicpoly})
 that  $\pl$ satisfies the differential equation $\ds \left(\pl'\right)^2=p(\pl)$.
A lattice $\Lambda$ is real if both $g_2,g_3 \in \R$, (this is equivalent to the statement
that $\overline{\Lambda}=\Lambda$).

Writing both $\Lambda(g_2,g_3)$ and $g_2(\lambda_1,\lambda_2)$ and $g_3(\lambda_1,\lambda_2)$  to show the dependence of the invariants on each other,
we have furthermore that
$ \ds g_2(k \lambda_1,k \lambda_2) =  k^{-4} g_2(\lambda_1,\lambda_2)$ and $g_3(k \lambda_1,k \lambda_2)= k^{-6} g_3(\lambda_1,\lambda_2).$

A few more classical identities help in the following material.  By differentiating (\ref{eqn:mainid}) with respect to $z$,
and simplifying, we see that
\begin{equation}\label{eqn:pprimeprime} \pl''(z) = 6 (\pl(z)^2)  - 1/2 g_2.\end{equation}  This leads to a ``point doubling identity'' that has the following form.  We set $x = \pl(z).$
\begin{equation} \label{eqn:double}  \pl( 2z) = \frac {x^4 + (1/2) g_2 x^2 +2 g_3 x + (1/16) g_2^2}{4 x^3 - g_2 x - g_3}. \end{equation}
A classical version of (\ref{eqn:double}) is
$$  \pl(2 z) = \frac 14 \left ( \frac{\pl''(z)}{\pl'(z)} \right )^2 - 2 \pl(z),$$
derived in many texts such as \cite{duv}.
In this paper we focus on lattices $\LA$ for which both $g_2$ and $g_3$ are integers.   Therefore, if $\pl(z) \in \Z$,
then the expression in (\ref{eqn:double}) is always rational and sometimes an integer.

More generally, it is useful to have this addition identity:
\begin{equation} \label{eqn:addition}  \pl( z + w) = \frac 14 \left (\frac {\pl'(z)-\pl'(w)}{\pl(z)-\pl(w)}\right)^2 - \pl(z)- \pl(w). \end{equation}

The {\it Fatou set} $F(\pl)$ is the
set of points $  z \in \ci$ such that $ \{\pl^{k} \colon k
\in \N \} $ is defined and normal in some neighborhood of $ z $. The
{\it Julia set} $J(\pl)$  is the complement of the Fatou set on the sphere.  For each
lattice $\Lambda$, every elliptic function with period lattice
$\Lambda$ is of Class $S$ \cite{ber} and does not have wandering domains
\cite{bak} or Baker domains \cite{ber}. Order 2 elliptic functions do not have Herman rings,
and if $\Lambda$ is a real rectangular lattice, then $\pl$ cannot have a Siegel disk \cite{hk1}.
Therefore in this setting,
$\pl$ has a short list of Fatou component types, which we summarize here.

\begin{theorem}\label{thm:fatoubound}  For a period lattice $\Lambda$,
every component  of $F(\wp_\Lambda)$ is preperiodic, and each
 forward
invariant Fatou  component
 contains one of the following:
\begin{enumerate}
\item a linearizing neighborhood of an attracting periodic point;
\item a B\"{o}ttcher neighborhood of a superattracting periodic
point; \item an attracting Leau petal for a periodic parabolic
point.  The periodic point is in $J(\wp_\Lambda);$
 \item a periodic Siegel
disk containing an irrationally neutral periodic point.
\end{enumerate}
\end{theorem}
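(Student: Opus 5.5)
The argument is an assembly of the cited structural results with the classical classification of periodic Fatou components for meromorphic functions. First I would establish that every Fatou component is preperiodic. Since $\pl$ is elliptic, it belongs to Class $S$ \cite{ber}: its singular values are the three critical values $e_1,e_2,e_3$, together with the asymptotic value $\infty$ coming from the poles. Baker's theorem \cite{bak} rules out wandering domains for elliptic functions, so every Fatou component $U$ has some iterate $\pl^n(U)$ lying in a periodic component. Passing to an iterate $\pl^p$, where $p$ is the period, we may then restrict attention to forward invariant components.

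Next I would invoke the Fatou--Bergweiler classification of periodic components of meromorphic functions. A forward invariant component $U$ of $\pl^p$ is one of the following: an immediate attracting basin, a superattracting basin, a parabolic (Leau) basin, a Siegel disk, a Herman ring, or a Baker domain.
\begin{itemize}
\item Baker domains are excluded for Class $S$ functions \cite{ber}. The point is that the singular set is finite, so a logarithmic change of variables forces iterates on a domain escaping to $\infty$ to behave like an expanding map, which contradicts normality.
\item Herman rings are excluded for elliptic functions of order $2$, as cited. One argument: a Herman ring must surround poles of $\pl$ in its complementary bounded component. Its boundary curves must then be accumulated by postsingular orbits, and for order $2$ there are only the three critical values available, which is not enough.
\end{itemize}
Removing these two types leaves exactly the four listed possibilities.

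For each remaining type I would produce the asserted neighborhood.
\begin{itemize}
\item In the attracting case, Koenigs linearization at the attracting periodic point gives a linearizing neighborhood inside $U$.
\item In the superattracting case, Böttcher's theorem applies at the periodic critical point, which is a half-lattice point.
\item In the parabolic case, the Leau--Fatou flower theorem gives an attracting petal in $U$. The parabolic point itself lies on $\partial U$, hence in $J(\pl)$, because iterates near a multiplier-root-of-unity point are not normal.
\item In the Siegel case, the irrationally indifferent point is linearizable and $U$ is the maximal linearization domain.
\end{itemize}

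The main obstacle is the exclusion of Herman rings and Baker domains. I would handle it by citing \cite{ber} and \cite{hk1} rather than reproving it, since the rest is standard local theory.
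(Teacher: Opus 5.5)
Your main line matches the paper's. There the theorem is justified only by the preceding paragraph: class~$S$ membership, no wandering domains \cite{bak}, no Baker domains \cite{ber}, and no Herman rings for order-2 elliptic functions \cite{hk1}, combined with the classification of periodic components and the local normal forms, exactly as you do. Because you ultimately rest on these citations the proof stands, but the heuristic justifications you attach are wrong and should be dropped or replaced.

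First, $\infty$ is not an asymptotic value of $\pl$; elliptic functions have no asymptotic values. A curve along which $\pl$ tends to a limit projects to a curve on the compact torus whose accumulation set is connected and lies in a finite fibre, so it converges to one point. Its lift therefore stays bounded and cannot tend to $\infty$. The value $\infty$ is singular only because the poles of $\pl$ are double, so it is a critical value.

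Second, ``three critical values is not enough'' is not an argument. No general principle prevents two or three critical orbits from accumulating on the boundary of a cycle of rings; for rational maps, Shishikura's inequality charges only two critical points per Herman ring cycle. A correct argument for $\pl$ uses its symmetry instead.
\begin{enumerate}
\item Some ring $H$ of the cycle surrounds a pole $\lambda$. Put $\sigma(z)=2\lambda-z$. Then $\pl\circ\sigma=\pl$, so $\sigma$ maps $F(\pl)$ to itself, and $\sigma(H)$ is a Fatou component that also separates $\lambda$ from $\infty$.
\item If $\sigma(H)\neq H$, the two are disjoint, so one lies in the bounded complementary component of the other. Applying $\sigma$ would then put each inside the other, which is impossible. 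Hence $\sigma(H)=H$.
\item The only fixed point of $\sigma$ is $\lambda\notin H$. So for every $z\in H$ the point $\sigma(z)\neq z$ lies in $H$ and has the same image, and $\pl$ is not injective on $H$.
\item This contradicts the univalence of $\pl$ along a Herman ring cycle.
\end{enumerate}

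Third, the logarithmic change of variables is the Eremenko--Lyubich argument for entire functions. For $\pl$, the preimage of a small neighbourhood of $\infty$ consists only of small discs about the poles, so there are no tracts for it to act on. The exclusion of Baker domains has to come from the cited meromorphic result.
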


There are unbounded Fatou components that are possible in this setting \cite{hk3}, see also Figures \ref{fig:toralband} and \ref{fig:rhombictoral}.

 \begin{definition} \label{def:toralband} Assume $\pl$ has period lattice $\Lambda=[\lambda_1, \lambda_2].$
A Fatou component $A_0$ of $F(\wp)$
is a {\it toral band} if $A_0$ contains an open subset $U$
which is simply connected in $\C$,  but $U$ projects to
a topological band around the torus $\C/\Lambda$  containing a
homotopically nontrivial curve.

\end{definition}

In parametrizing the dynamics of elliptic functions, we use the fact that
the values $g_{2}$ and $g_{3}$ are complete invariants
of the lattice $\Lambda$: if
$g_{2}(\Lambda) = g_{2}(\Lambda^\prime)$ and $g_{3}(\Lambda) =
g_{3}(\Lambda^\prime)$, then $\Lambda = \Lambda^\prime$.
Furthermore given any $g_{2}$ and $g_{3}$ such that
$g_{2}^{3}-27g_{3}^{2} \neq 0$ there exists a lattice $\Lambda$
having $g_{2}= g_2(\Lambda)$ and $g_{3}= g_3(\Lambda)$ as its
invariants \cite{duv}.
It has long been established that changing the invariants $g_2$ and $g_3$ changes the Julia and Fatou sets of $\pl$ \cite{hk1}, see also Figures
\ref{fig:fourgroup},\ref{fig:toralband},\ref{fig:Juliacurve},\ref{fig:per2}, and \ref{fig:rhombictoral}.

Under our hypotheses, we use techniques developed for dynamics to analyze lattices $\LA = \LA(g_2,g_3)$ for which there are points $z \in \C/\Lambda$ such that both $\pl(z)$ and $\pl'(z)$
are integers.  For example, under our hypotheses on $\LA$, by (\ref{eqn:mainid}), if $\pl(z)$ is an integer, then $\pl'(z)$ is either an integer or a square root of an integer.   Moreover since for every real lattice,  $\pl:\R \rightarrow [e_r, \infty]$ is surjective, for $e_r$ a real critical value, there are infinity many points in each fundamental period on $\R$ for which
$\pl(z) \in \Z$.   The goal of this paper then is to understand for which $z$ both $\pl(z)$ and $\pl'(z)$ are integers, and whether these points lie in the Fatou or Julia sets of $\pl$, or some of each.

There are many subtleties to the problem and they relate to the Diophantine equation determined by (\ref{eqn:mainid}). Before turning to that,
we integrate the additive group structure on the flat torus $\C/\Lambda$ into the discussion.   Our interest lies
in rational (and in particular, integer) values of $\pl(z)$ and $\pl'(z)$, for $z \in \C/\LA$, because the points in the plane on which both $\pl$ and $\pl'$ have rational values a form an additive group in the following sense.

\begin{proposition}\label{prop:toralgroup}  Assume that $g_2 \in \Z$ and $g_3 \in \Z$, and $\LA=\LA(g_2,g_3)$ is a lattice. If $[0]$ denotes the residue class of any $\lambda \in \LA$ (all poles for  $\pl$ are in this residue class), then the set
\begin{equation} \label{eqn:realelliptic} E_{(g_2,g_3)}(\Q)= \{ z \in \C/\Lambda \,\, {\rm such \, that} \, \, \pl(z),\pl'(z) \in \Q \} \end{equation}
forms a group under the usual addition on $\C/\LA$ given by $[z]+[v] = [z+v]$, with $[0]$ as the group identity.
\end{proposition}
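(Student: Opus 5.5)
We prove this as a subgroup statement inside the group $\C/\LA$. Addition there is already associative, $[0]$ is the identity, and inverses are $[-z]$. So it suffices to check three things: $[0]\in E_{(g_2,g_3)}(\Q)$, the set is closed under negation, and it is closed under addition. We adopt the convention, implicit in the statement, that $[0]$ (where $\pl$ and $\pl'$ have poles) is declared to belong to $E_{(g_2,g_3)}(\Q)$, as the point at infinity on the curve. The set is well defined on cosets because $\pl$ and $\pl'$ are $\LA$-periodic.

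\emph{Inverses.} Since $\pl$ is even and $\pl'$ is odd, we have $\pl(-z)=\pl(z)\in\Q$ and $\pl'(-z)=-\pl'(z)\in\Q$. Hence $[-z]\in E_{(g_2,g_3)}(\Q)$ whenever $[z]$ is.

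\emph{Closure under addition.} Take $[z],[w]\in E_{(g_2,g_3)}(\Q)$; if either is $[0]$ there is nothing to show. We split into three cases.
\begin{enumerate}
\item If $\pl(z)\neq\pl(w)$, apply the addition identity (\ref{eqn:addition}). Its right-hand side is a rational expression in $\pl(z),\pl(w),\pl'(z),\pl'(w)$ with rational coefficients, so $\pl(z+w)\in\Q$.
\item If $\pl(z)=\pl(w)$ and $[w]=[-z]$, then $[z+w]=[0]$, which lies in the set by convention.
\item If $\pl(z)=\pl(w)$ and $[w]\neq[-z]$, then, since $\pl$ has order $2$ and each value is taken exactly at $\pm z$ mod $\LA$, we must have $[w]=[z]$. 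This is doubling. If $\pl'(z)=0$, then $[z]$ is a half-lattice point and $[2z]=[0]$. Otherwise the doubling formula (\ref{eqn:double}), whose coefficients are rational because $g_2,g_3\in\Z$, gives $\pl(2z)\in\Q$.
\end{enumerate}

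The remaining step, and the one needing care, is showing $\pl'(z+w)\in\Q$. Knowing $\pl(z+w)\in\Q$ together with (\ref{eqn:mainid}) only gives $\pl'(z+w)$ up to a square root. To fix this we use the chord construction. Set
\[
m=\frac{\pl'(z)-\pl'(w)}{\pl(z)-\pl(w)}\in\Q ,
\]
or, in the doubling case, $m=\pl''(z)/\pl'(z)=(6\pl(z)^2-g_2/2)/\pl'(z)\in\Q$ by (\ref{eqn:pprimeprime}). Consider the elliptic function
\[
f(u)=\pl'(u)-m\,\pl(u)-b, \qquad b=\pl'(z)-m\pl(z)\in\Q .
\]
It has a single pole, of order $3$, in each period parallelogram, so it has exactly three zeros mod $\LA$, and those zeros sum to $0$ mod $\LA$. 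Two of the zeros are $z$ and $w$ (a double zero at $z$ in the tangent case), so the third is $-(z+w)$. Therefore
\[
\pl'(z+w)=-\pl'(-(z+w))=-\bigl(m\,\pl(z+w)+b\bigr)\in\Q .
\]
The same zero count also recovers (\ref{eqn:addition}): substituting the line into (\ref{eqn:mainid}) gives a cubic $4x^3-m^2x^2-\cdots$ whose three roots are $\pl(z)$, $\pl(w)$ and $\pl(z+w)$, so their sum is $m^2/4$. I expect the main obstacle to be treating the degenerate subcases, $\pl'(z)=0$ and the tangent line, consistently with this argument, since the sum-of-zeros property of elliptic functions is exactly where the group law enters.
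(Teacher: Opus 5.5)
Your proof is correct, and it takes a different route from the paper's.

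The paper argues geometrically inside the real lattice. It observes that points with real $\pl,\pl'$ lie on $\R$, and for a rectangular lattice $[a,ib]$ also on the half-lattice line $c_2+\R$. It checks that these loci are closed under addition mod $\LA$, for instance $(\zeta+c_2)+(\omega+c_2)\equiv\zeta+\omega$. It then cites the addition formula (\ref{eqn:addition}) for rationality.

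You instead run the subgroup test on $\C/\LA$ directly. You never ask where the points sit, and the argument works verbatim for any $g_2,g_3\in\Q$.

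Your route supplies the steps the paper's proof passes over:
\begin{itemize}
\item (\ref{eqn:addition}) determines only $\pl(z+w)$, so by (\ref{eqn:mainid}) it fixes $\pl'(z+w)$ only up to a square root. The paper nonetheless asserts $\pl'(z+w)\in\Q$ ``by (\ref{eqn:addition})''.
\item (\ref{eqn:addition}) is undefined when $\pl(z)=\pl(w)$, and the paper never treats that case.
\end{itemize}
Your case split (inverse pair versus doubling via (\ref{eqn:double})) handles the second point. The sum-of-zeros argument for the order-$3$ elliptic function $f=\pl'-m\pl-b$, which yields $\pl'(z+w)=-\bigl(m\pl(z+w)+b\bigr)$, handles the first. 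This is the chord--tangent construction, so it also explains why the group law on the curve matches addition on the torus.

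What the paper's approach buys is the picture of where rational points live: on $\R/(\LA\cap\R)$ and, in the rectangular case, on $c_2+\R$. That is the viewpoint the rest of the paper relies on when placing integer points in the Fatou or Julia set, but it is not needed for the group statement itself.
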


\begin{proof}  We rewrite  $[z +w]$ as $z+w (\bmod  \LA)$, so that if $\lambda, \mu \in \LA$ then $(z+\lambda) +(w+\mu) (\bmod \LA)=[z+w].$
 Consider $E_{(g_2,g_3)}(\Q)$ as in (\ref{eqn:realelliptic}).  Suppose $z,w \in \C$ are of the form $z,w \in \R$;   then $z+w \in \R$ as well.  Therefore if $\LA$ is a real rhombic or rectangular lattice, then $z,w \in E_{(g_2,g_3)}(\Q)$ implies that $\pl(z), \pl'(z),  \pl(w), \pl'(w) \in \Q$,  and  $\pl(z+w), \pl'(z+w) \in \Q$ as well,  by  (\ref{eqn:addition}).
If $\LA$ is rhombic, this is all we need to show for the moment.
If $\LA$ is rectangular, we write $\LA=[a, bi], \, a,b \in \R$, and we label $c_1 = a/2, c_2 = i b/2$, and $c_3 = (a+ i b)/2.$

If both $z,w \in  E_{(g_2,g_3)}\in(\Q)$ are of the form $z=\zeta+ c_2, w = \omega+ c_2$, with  $\zeta,\omega \in \R$, then
$z+w = (\zeta + \omega) +  i b = \zeta+\omega (\bmod \LA)$, so the sum is a real number; therefore $\pl(z+w)$ and  $\pl'(z+w)$ are real too.  Moreover by  (\ref{eqn:addition}) $\pl(z+w), \pl'(z+w) \in \Q$ as well.
If exactly one of $z,w \in E_{(g_2,g_3)}\in (\Q)$ is of the form  $v=\nu+ c_2, \nu \in \R$, then label that point $z$.  Then $z+w = (\nu +w) + c_2$ and again (\ref{eqn:addition}) implies that $\pl(z+w), \pl'(z+w) \in \Q$ as well.    The additive negative of $z \in E_{(g_2,g_3)}(\Q)$ is $-z (\bmod \LA)$, so the identity element is $[0]$ as claimed earlier.
\end{proof}

 \subsection{Elliptic curves}
 We present only the material needed this paper, so while we extend the definition given in (\ref{eqn:realelliptic}), it is presented much more generally in other sources such as \cite{silvermantate}.
     Let  $K$  denote one of the following fields:  the rational numbers
$\Q$, the real numbers $\R$, or the complex numbers $\C$.
   Given $g_2,g_3 \in K$ such that $g_2^3-27 g_3^2 \neq 0$, a {\em (nonsingular) elliptic curve over $K$}, denoted $E_{(g_2,g_3)}(K)$, is defined by
   \begin{equation} \label{eqn:ellipticcurve} E_{(g_2,g_3)}(K)= \{ z \in \C/\Lambda \,\, {\rm such \, that} \, \, \pl(z),\pl'(z) \in K \}, \end{equation} the set of $K$-valued solutions to the equation  $\ds  y^2 = p(\pl)$ as well as a point $\orb$, with the added condition that there are distinct solutions to
   $p(x)$.  This is equivalent to saying the discriminant of the polynomial $p$ in (\ref{eqn:cubicpoly}) should be nonzero.

  Since $\Q \subset \R \subset \C$, we fix $g_2,g_3$ as above and usually write $E(K) = E_{(g_2,g_3)}(K)$. We define the sets
  \begin{equation}\label{eqn:JuliaFatou}  \widehat{J}(E(K)) = J (\pl) \cap (K \cup \{\infty\}) \quad {\rm{and}} \quad  \widehat{F}(E(K)) = F(\pl) \cap K. \end{equation}
By a slight extension of usual convention, we call $\widehat{J}(E(K))$ and  $\widehat{F}(E(K))$ the {\em Julia and Fatou sets} of $E(K)$.  We note that $\{\infty\} \notin F(\pl)$.

\begin{remark}
It follows that $ \widehat{J}(E(\C)) = J (\pl)$.  From (\cite{hk3}, Propositions 5.5 and 5.6) it often happens that  $\widehat{J}(E(\R))$ is a Cantor set while $J(\pl)$ is connected.
It is also possible that $\widehat{J}(E(\R)) = \R$ while $F(\pl) \neq \emptyset$ (\cite{hk3}, Theorem 5.2).

\end{remark}

 \begin{figure}
\begin{center}
{\includegraphics[width=.95\textwidth]{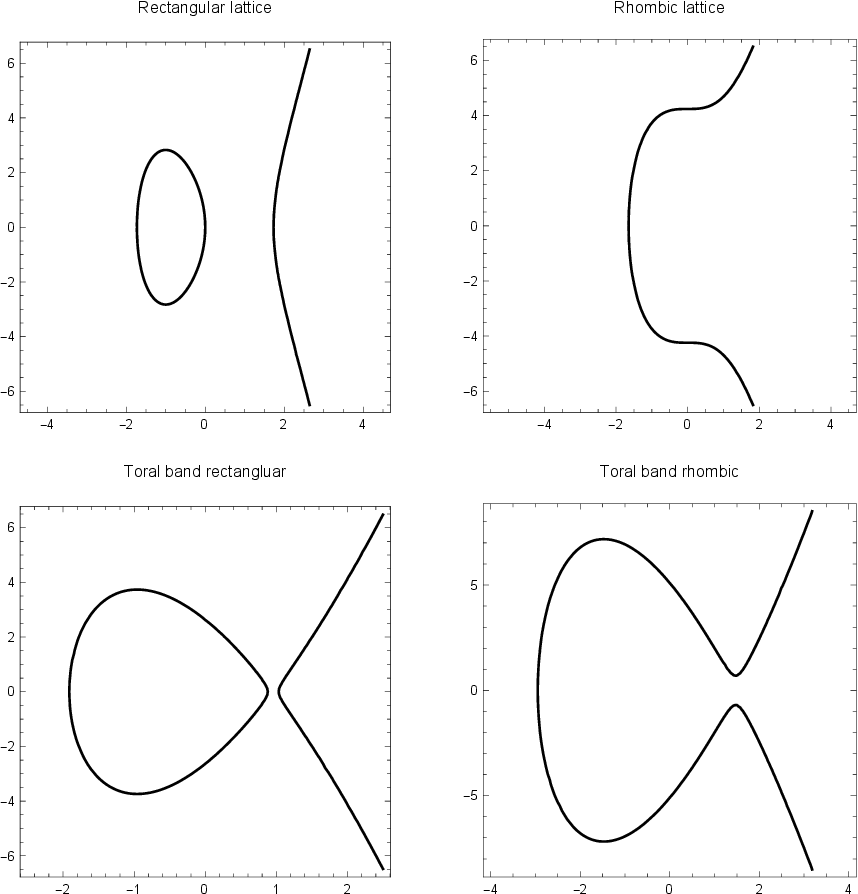}}
\caption{Every pair of real invariants $g_2,g_3$ gives rise to a lattice $\LA$ via (\ref{eqn:mainid}). The critical values of $\pl$ corresponds to points with $y$ coordinate $0$ on the curve,as well as complex conjugate points, and toral bands can occur in the Fatou set when two critical values are close to each other, as shown in the second row.} \label{fig:curve12}
\end{center}
 \end{figure}

Real elliptic curves are shown in Figure \ref{fig:curve12}.
An elliptic curve over $\R$ is an abelian group,  with a group addition operation defined on it and known to Fermat.  The additive identity is denoted by $\orb$, and corresponds to the point at $\infty$ in the plane, which, when added to the curve $E$ in $\R^2$ compactifies it.
If E is an elliptic curve, with points $A,B \in E$, then the chord joining $A$ and $B$ (or the tangent to the curve at $A$, if $A=B$) intersects $E$ at a unique third point called
$A*B$.
The reflection of $A*B$ about the  $x$-axis yields a unique point, which is defined to be  $A+B$; if $A = B$, we call this reflection $2A$.  When $B = -A$, then $A+B = \orb$.  It is well-established that this operation makes $E_{(g_2,g_3)}(\R)$ into an abelian group.  Moreover,  $E_{(g_2,g_3)}(\Q)$ is a subgroup and the addition operations yield the formulas in (\ref{eqn:double}) and (\ref{eqn:addition}).

   \subsubsection{Group structure.}
   The pairs of numbers of the form $(\pl(z),\pl'(z))$ with rational coordinates form a group, if we add an identity point $\orb$ (the point at  $\infty$
   on $\ci$, and any lattice point on $\C/\Lambda)$.  There is a geometric interpretation on the associated elliptic curve, and on the torus (or circle(s)), there is the usual group structure.
   
There are more than three ways to consider the group identity element for a real elliptic curve $E_{(g_2,g_3)}(\R)$.  On the complex torus $\C/\LA(g_2,g_3)$, the group identity element is the coset containing the origin $0+0i$;  since every $\lambda \in \LA$, satisfies $\lambda \in [0+0 i]$, every lattice point plays the role of the identity on $\C/\LA$.  Since we are restricting to real elliptic curves, the statement is similar, but we only consider the coset for $\R$-valued lattice points,  which means that we look at the coset of a real generator of $\Lambda.$

   If we consider $E_{(g_2,g_3)}(\R)$ as a one-dimensional curve in $\R^2$, the group identity element $\orb$ is identified with the point at $(\infty,\infty)$, and we compactify $E_{(g_2,g_3)}(\R)$ by adding this point and carrying the topology over from $\R / (\Lambda \cap \R)$.

 If $(x,y)$ is a rational point on an elliptic curve of finite order, then the  coordinates are integers, and there are only a finite number of such points.  They form an additive subgroup called the torsion group of $E_{(g_2,g_3)}(\Q)$, and usually written $E_{\rm tor}(\Q)$; it can be the trivial group $\{\orb\}$.
 An element of $E_{\rm tor}(\Q)$ is called a torsion point.
This result is called the Nagell-Lutz Theorem and a proof appears in \cite{silvermantate}.  A stronger result is that only a short list of torsion groups can occur,
 and the group $E_{(g_2,g_3)}(\Q)$ consists of a torsion group and a finitely generated group, (proved by Mordell in 1922).

 Throughout, by $\Z_n$ we denote the finite abelian group with $n$ elements and isomorphic to $\{0,1,\ldots,n-1\}$ with the operation of addition $(\bmod n)$.

\section{Dynamics of rectangular lattices with integer points}

Real rectangular lattices $\Lambda=[\lambda_1,\lambda_2 i], \, \lambda_1,\lambda_2 >0$ give real elliptic curves with two components as shown in Figure \ref{fig:curve12}.
A real lattice $\LA$ is square if and only if $g_3=0$. The next result is well-known (see \cite{silvermantate}, Thm. 2.1).
\begin{lemma} A lattice gives rise to an order 2 torsion point if and only if there exists an integer-valued critical value for $\pl$. \end{lemma}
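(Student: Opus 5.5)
The plan is to work on the torus $\C/\LA$ with the group law of Proposition \ref{prop:toralgroup}, under the standing assumption $g_2,g_3\in\Z$, and to read ``order 2 torsion point'' as an element of order exactly two in $E_{(g_2,g_3)}(\Q)$.

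The first step identifies the points of order two. A class $[z]\neq[0]$ satisfies $2[z]=[0]$ exactly when $2z\in\LA$ and $z\notin\LA$. This means $z$ is a half lattice point, so $[z]\in\{[c_1],[c_2],[c_3]\}$. These are precisely the critical points of $\pl$, so $\pl'(z)=0$ and $\pl(z)=e_j$ for some $j$.

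The second step proves the forward direction. Suppose $[z]$ is an order 2 point of $E_{(g_2,g_3)}(\Q)$. By the first step, $(\pl(z),\pl'(z))=(e_j,0)$ with $e_j\in\Q$. The Nagell--Lutz theorem, quoted above, says torsion points have integer coordinates, so $e_j\in\Z$. One can also see this directly: $e_j$ is a rational root of $4x^3-g_2x-g_3$. Multiplying by $2$ and setting $u=2x$ gives the monic equation $u^3-g_2u-2g_3=0$ with integer coefficients, so $2e_j\in\Z$. The Nagell--Lutz theorem then upgrades this to $e_j\in\Z$.

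The third step proves the converse. Suppose some critical value $e_j=\pl(c_j)$ is an integer. Then $\pl(c_j)=e_j\in\Z\subset\Q$ and $\pl'(c_j)=0\in\Q$, so $[c_j]\in E_{(g_2,g_3)}(\Q)$. Since $2c_j\in\LA$ and $c_j\notin\LA$, the class $[c_j]$ is a nonidentity element with $2[c_j]=[0]$, so it has order exactly $2$. This can be checked geometrically as well: the tangent line to the curve at $(e_j,0)$ is vertical, so it meets $E$ only at $\orb$, which gives $2(e_j,0)=\orb$. The same conclusion follows from the doubling formula (\ref{eqn:double}), whose denominator $p(e_j)$ vanishes, so $\pl(2c_j)=\infty$.

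The main obstacle is the integrality in the forward direction. Rationality of $e_j$ alone only yields $e_j\in\frac12\Z$, so a direct rational-root argument is not enough. Here I would lean on the Nagell--Lutz theorem as stated earlier, noting that the paper's normalization $y^2=4x^3-g_2x-g_3$ differs from the standard Weierstrass form $y^2=x^3+ax+b$. If that causes trouble, I would restrict to the case the lemma evidently intends, namely that the critical value realizing the 2-torsion point is the integer point, and read the statement with that convention.
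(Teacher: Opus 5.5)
Your argument takes the same route as the paper's. Points of order $2$ on $\C/\LA$ are the half-lattice classes $[c_j]$. The converse direction is immediate, and your treatment of it is correct. Integrality in the forward direction is handed off to Nagell--Lutz, which the paper's one-line proof also does implicitly, through its background claim that rational points of finite order have integer coordinates.

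That handoff is a genuine gap, and your suspicion about the normalization is justified. Nagell--Lutz requires a model $Y^2=X^3+aX+b$ with $a,b\in\Z$. Starting from $y^2=4x^3-g_2x-g_3$, the rational change of variables that works for all integer $g_2,g_3$ is $X=4x$, $Y=4y$, which gives $Y^2=X^3-4g_2X-16g_3$. Nagell--Lutz on this model only yields $4e_j\in\Z$, which is weaker than the $2e_j\in\Z$ you already have. Nothing can close the gap, because the forward implication is false as stated. For $(g_2,g_3)=(5,-2)$ we have $\Delta=17\neq 0$ and
\[
p(x)=4x^3-5x+2=(2x-1)(2x^2+x-2),
\]
so the critical values are $\tfrac12$ and $(-1\pm\sqrt{17})/4$. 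The real rectangular lattice $\LA(5,-2)$ therefore has the rational point $(\tfrac12,0)$ of order $2$ in $E_{(5,-2)}(\Q)$, yet no integer critical value. The rhombic lattice $\LA(-1,1)$, with $p(x)=(2x-1)(2x^2+x+1)$, behaves the same way.

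So your fallback is not a harmless convention; it is the only way the lemma survives, and the paper's proof has the same hole. One option is to read ``order $2$ torsion point'' as an order-$2$ point with integer coordinates; then your first step already gives both directions. The other option is to add a hypothesis, and the natural one is that $g_2$ is even. Then $u=2e_j\in\Z$ satisfies $u^3=g_2u+2g_3\equiv 0 \pmod 2$, so $u$ is even and $e_j\in\Z$, with no appeal to Nagell--Lutz. This parity condition is exactly what fails in the examples above. Note also that a counterexample needs exactly one rational critical value. If all three are rational, the integers $2e_1,2e_2,2e_3$ sum to $0$, so they cannot all be odd, and some $e_j$ is an integer.
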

\begin{proof}  The lemma follows from Proposition \ref{prop:toralgroup};  on the torus $\C/\LA$,
a point of order 2 needs to be a half lattice point, which is a critical point.  An order 2 point is in a finite abelian group only if the (integer) torsion group has even order. \end{proof}

 In the real rectangular lattice case, we can compute integer points in $E_{(g_2,g_3)}(\Q)$ using results from \cite{hk3}.  In iterated Weierstrass $\wp$ functions, when finding examples of lattices with various dynamical properties we often start with a {\em standard lattice}, which is a lattice with invariants $g_2, g_3=-g2+4$.  In this way we always obtain a critical value of $1$ or $-1$.  From there, a scaling and the use of (\ref{eqn:homog}) yields lattices such that $\pl$ has various dynamics properties, for example a superattracting fixed point.  Since standard lattices have order 2 integer points without scaling, we use that in what follows.  For example, Figure \ref{fig:fourgroup} shows standard rectangular lattice where the attracting fixed point contains the integer critical value $1$ in the immediate attracting basin of an attracting fixed point.  A similar graphic for the standard rectangular square lattice is shown in Figure \ref{fig:Juliacurve} using $g_2=4$ and $g_3=0$ (see also Proposition \ref{prop:standardsquare}.

\begin{theorem}\label{thm:critvals} For integers $m > 0$, and $n \in \Z$ satisfying $|n| <  2 m$, set
\begin{equation}\label{eqn:critvals} g_2=4(m^2 + mn +n^2), \quad  g_3= -4 (m^2 n + m n^2). \end{equation} Then if $\LA = \LA(g_2,g_3)$, the critical values for
$\pl$ are $m,n,$ and $-m-n$;   there are three order 2 torsion points for $\pl$ occurring where $E(\R)$ crosses the x-axis: $(m,0), (n,0), (-m-n,0)$. \end{theorem}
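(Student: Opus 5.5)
The plan is a direct computation with the relations (\ref{es}), followed by a discriminant check so that $\LA(g_2,g_3)$ is a genuine lattice. Set $e_1=m$, $e_2=n$, $e_3=-m-n$ and expand
\begin{equation*}
4(x-m)(x-n)(x+m+n).
\end{equation*}
We have $e_1+e_2+e_3=0$. Next,
\begin{equation*}
e_1e_2+e_1e_3+e_2e_3 = mn-(m+n)^2 = -(m^2+mn+n^2) = -g_2/4 .
\end{equation*}
Finally,
\begin{equation*}
e_1e_2e_3=-mn(m+n)=-(m^2n+mn^2)=g_3/4 .
\end{equation*}
Hence $4(x-m)(x-n)(x+m+n)=4x^3-g_2x-g_3=p(x)$, so $m$, $n$, $-m-n$ are exactly the roots of the cubic in (\ref{eqn:cubicpoly}).

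The next step is to confirm $\bigtriangleup\neq 0$, i.e.\ that the three roots are distinct; otherwise no lattice exists. Since $\bigtriangleup$ is a nonzero constant multiple of $\prod_{i<j}(e_i-e_j)^2$, we need
\begin{equation*}
m\neq n,\qquad n\neq -m-n \iff m\neq -2n,\qquad m\neq -m-n \iff n\neq -2m .
\end{equation*}
The condition $|n|<2m$ rules out $n=-2m$. The coincidences $n=m$ (double root at $m$) and $m=-2n$ (double root at $n$) are not excluded by $|n|<2m$, so the hypothesis must be read as also excluding them, or they must be discarded as degenerate. Among these conditions I regard the distinctness check as the only real obstacle.

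Once distinctness holds, the existence result quoted from \cite{duv} gives a lattice $\LA=\LA(g_2,g_3)$. Since $g_2,g_3\in\Z\subset\R$, this lattice is real. Because the critical values of $\pl$ are precisely the roots of $p$ (by (\ref{eqn:mainid}), $\pl'(c_j)=0$ forces $p(e_j)=0$, and the $e_j$ are distinct), they are $m$, $n$, $-m-n$. All three are real, so $p$ has positive discriminant and the real curve $y^2=p(x)$ has two components; this is the rectangular case.

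For the torsion claim, each critical point $c_j$ is a half-lattice point with $\pl(c_j)=e_j\in\Z$ and $\pl'(c_j)=0$. Hence $[c_j]\in E_{(g_2,g_3)}(\Q)$ and $2[c_j]=[0]$, so each $[c_j]$ has order exactly $2$, as the preceding Lemma also gives. On the curve these are the points $(m,0)$, $(n,0)$, $(-m-n,0)$ where $E(\R)$ meets the $x$-axis.
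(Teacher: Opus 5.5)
Your proof is correct and takes the paper's route. The paper's whole argument is the factorization $p(x)=4(x-m)(x-n)(x+m+n)$, which you verify through the relations (\ref{es}); your torsion argument ($2c_j\in\LA$, $\pl'(c_j)=0$) spells out what the paper leaves implicit. Your distinctness check is something the paper omits, and you are right that it exposes a defect in the statement: $n=m$, and $n=-m/2$ for even $m$, both satisfy $|n|<2m$ but give a repeated root. For example, $(m,n)=(1,1)$ and $(2,-1)$ give $(g_2,g_3)=(12,-8)$ and $(12,8)$, with $\bigtriangleup=0$, so no lattice $\LA(g_2,g_3)$ exists and these cases must be excluded.
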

\begin{proof} Substituting for $g_2$ and $g_3$, the polynomial (\ref{eqn:cubicpoly})
can be rewritten as $p(x)= 4 (m - t) (n - t) (m + n + t)$ under the hypotheses. \end{proof}

\noindent{\bf Examples.} {\bf (1)} \, When we set $n=1$ in Theorem \ref{thm:critvals}, we recover a standard lattice.  Versions of Theorem \ref{thm:critvals} for standard lattices appear in (\cite{hk3}, Thm 5.1) and(\cite{hm1}, Prop. 2.5).
{\bf (2)} \. An example of a standard lattice whose torsion group is the Klein four-group, comes from setting $g_2 = 8, g_3=-4.$  The elements of the torsion group, along with the attracting fixed point are shown in Figure \ref{fig:fourgroup}.  Two integer points are $c_1$, giving $A_1=(1,0) \in \widehat{F}(E)$ and $A_2= \orb = 2 A_1$, coming from  $2 c_1 = \lambda_1$.  There are two more integer points, $B_1=(0,2)$, $B_2=-B_1=(0,-2)$.  They come from non-real points in $\C/\Lambda$, points on the half-lattice line that maps to
$[e_2,e_3]\approx [-1.62, .62]$, with two crossings of the real axis per fundamental interval.

\begin{figure}
\begin{center}
{\includegraphics[width=.95\textwidth]{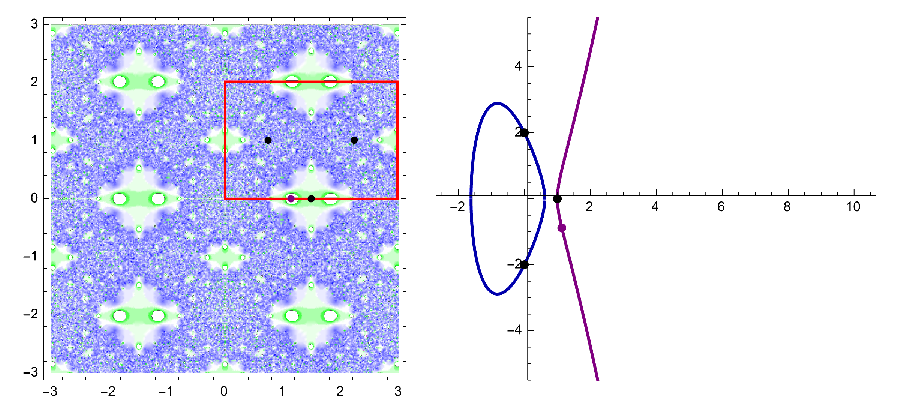}}
\caption{ $J(\pl)$, using $g_2=8, g_3=-4$ on the right. The attracting fixed point is marked in purple, the torsion point in its immediate attracting basin,
and two other integer points in black.  On the right, the associated elliptic curve with the same points marked. The other 3  points $(0,\pm 2)$ and $\orb$ in the torsion group are $J(\pl)$ since $0$ is a pole.} \label{fig:fourgroup}
\end{center}
 \end{figure}

 \begin{figure}
 \begin{center}
{\includegraphics[width=.95\textwidth]{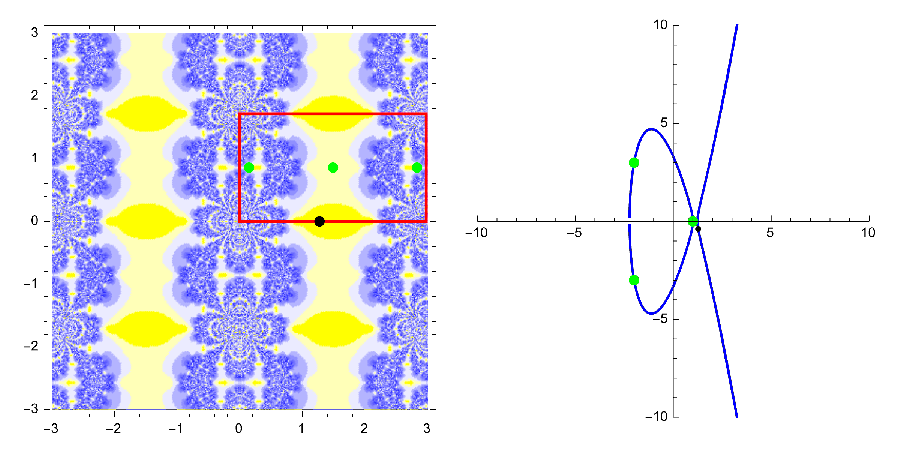}}
\caption{A toral band in $F(\pl)$, and $J(\pl)$ are shown at right, using $g_2=15, g_3=-11$. The attracting fixed point is black, and the torsion point in its immediate attracting basin,
and two other integer points are cyan.  On the right, the associated elliptic curve with the same points marked. The curve is nonsingular but the components come very close to each other.  All torsion points except $\orb$ are in $F(\pl)$.} \label{fig:toralband}
\end{center}
 \end{figure}

\begin{corollary}   For every $g_2 \in \N$, and integer $k$, $0\leq k< \sqrt{g_2}/3$,  setting $g_3 = -k g_2 + 4 k^3$, gives $(k,0)$ as an integer point for the real rectangular lattice $\LA(g_2,g_3).$  More generally, for $g_2 \in \Z$,  $k \in \Z$ setting $g_3 = -k g_2 + 4 k^3$ yields the integer point $(k,0)$ for the associated real lattice, as long as $g_2^3-27 g_3^2 \neq 0$.
\end{corollary}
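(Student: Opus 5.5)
The plan is to verify directly that $x=k$ is a root of the cubic $p(x)=4x^3-g_2x-g_3$ from (\ref{eqn:cubicpoly}). Then $k$ is a critical value $e_j=\pl(c_j)$, so the corresponding half-lattice point gives $\pl(c_j)=k\in\Z$ and $\pl'(c_j)=0\in\Z$. Substituting $g_3=-kg_2+4k^3$ gives
$$p(k)=4k^3-g_2k-(-kg_2+4k^3)=0.$$
Equivalently, $p(x)=(x-k)\bigl(4x^2+4kx+4k^2-g_2\bigr)$, and the nonsingularity hypothesis $g_2^3-27g_3^2\neq 0$ guarantees that a lattice $\LA(g_2,g_3)$ exists, as recalled in Section \ref{sec:background}. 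This already establishes the general statement: $(k,0)$ lies on $E_{(g_2,g_3)}(\Q)$ with integer coordinates, and it is the order-2 point arising from the half lattice point $c_j$ with $e_j=k$. This matches the Lemma on order-2 torsion points.

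For the first statement it remains to show two things: the discriminant is nonzero, and the lattice is real rectangular. A real lattice is rectangular exactly when all three $e_j$ are real, that is, when $\bigtriangleup>0$. The other two roots solve $4x^2+4kx+4k^2-g_2=0$, namely
$$x=\frac{-k\pm\sqrt{g_2-3k^2}}{2}.$$
These are real and distinct precisely when $g_2>3k^2$. The roots must also differ from $k$. Evaluating the quadratic at $x=k$ gives $12k^2-g_2$, which is nonzero when $g_2\neq 12k^2$. The hypothesis $0\le k<\sqrt{g_2}/3$ gives $9k^2<g_2$, so $3k^2<g_2$ holds. For the condition $g_2\neq 12k^2$ I would check the bound carefully: $9k^2<g_2$ does not by itself exclude $g_2=12k^2$, so I expect this to be the main subtle point. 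If $g_2=12k^2$ occurs, the discriminant vanishes and the curve is singular. In that case I would appeal to the standing hypothesis $g_2^3-27g_3^2\neq0$ stated in the second sentence, or else note that the intended bound should exclude it. Apart from this, all three roots are real and distinct, so $\bigtriangleup>0$ and $\LA$ is real rectangular, as in Theorem \ref{thm:critvals}.

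Finally, one can compare with Theorem \ref{thm:critvals}: when $g_2=4(k^2+kn+n^2)$ the roots are $k$, $n$ and $-k-n$, which recovers that theorem as a special case. This comparison is a consistency check and is not needed for the proof.
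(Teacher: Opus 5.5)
Your argument is the paper's own. You check $p(k)=0$, factor out $(x-k)$, read off the other critical values $\frac{-k\pm\sqrt{g_2-3k^2}}{2}$, and require three distinct real roots in the rectangular case; the general statement then follows exactly as you say.

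The subtle point you flagged is real, and you can settle it rather than hedge. The value $g_2=12k^2$ satisfies $9k^2<g_2$ for every $k\ge 1$. For example, $g_2=12$, $k=1$ gives $g_3=-8$, $p(x)=4(x-1)^2(x+2)$ and $g_2^3-27g_3^2=1728-1728=0$. So the first sentence of the corollary is false as written: no lattice exists there. The paper's proof has precisely this gap when it asserts that the bound on $k$ guarantees distinct roots.

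No argument can cover that case, and appealing to the nonsingularity condition of the second sentence amounts to amending the first. The correct first sentence should assume $3k^2<g_2$ and $g_2\neq 12k^2$ (for instance, $12k^2<g_2$ suffices). With that amendment your proof is complete.
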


\begin{proof}  To obtain a rectangular lattice, it is necessary that $g_2>0$.  Under the hypotheses, the three critical values of $\pl$ are
    $$k \quad {\rm{and}} \quad \frac {- k \pm \sqrt{g_2 - 3 k^2}}{2}.$$  This comes seeing that $p(k)=0$ in (\ref{eqn:cubicpoly} and then factoring
    $(t-k)$ out from (\ref{eqn:cubicpoly}).
For a rectangular lattice we must have 3 distinct real roots of $p(t)$, which is guaranteed by the condition on $k$.  In general the roots must be distinct as well, so  $\bigtriangleup \neq 0$, but the roots might not be real and the resulting lattice rhombic.  If $g_2=0$, then $g_3 = 4 k^3$ and the result still holds.
\end{proof}

\begin{proposition}\label{prop:standardsquare} If $g_2=4$ and $g_3 = 0$, (so m=1, n=-1 in (\ref{eqn:critvals})), then the there are torsion points $(1,0),(-1,0) \in \widehat{F}(E(\Q)))$, but $(0,0) \in \widehat{J}(E(\Q))$.
For $\LA(4,0)$, $E_{\rm tor}(\Q) = \Z_2  \oplus \Z_2$.  \end{proposition}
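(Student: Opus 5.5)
The plan has three parts: the arithmetic of $E_{\rm tor}(\Q)$, the Julia point $(0,0)$, and the Fatou points $(\pm 1,0)$. With $m=1,n=-1$ in Theorem \ref{thm:critvals}, the polynomial (\ref{eqn:cubicpoly}) factors as $p(x)=4x^3-4x=4x(x-1)(x+1)$. So the critical values are $1,0,-1$, and the three points $(1,0),(0,0),(-1,0)$ are order 2 torsion points. Together with $\orb$ they give a copy of $\Z_2\oplus\Z_2$ inside $E_{\rm tor}(\Q)$.

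To show there is nothing more, I would substitute $Y=y/2$, which turns the curve into $Y^2=x^3-x$, the congruent number curve for $n=1$. For this curve it is classical (Fermat's right-triangle theorem, or a direct Nagell--Lutz check) that $E(\Q)=\{\orb,(0,0),(\pm1,0)\}$. If I want to stay self-contained, I would run Nagell--Lutz directly. A torsion point with $Y\neq 0$ must have $Y^2\mid \bigtriangleup$-type discriminant, a power of $2$, so only finitely many $x$ need checking. For each candidate one verifies that $x^3-x$ is not a nonzero square, or that doubling via (\ref{eqn:double}) produces a non-integer. This gives $E_{\rm tor}(\Q)=\Z_2\oplus\Z_2$.

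For the dynamics, take the square lattice $\LA(4,0)=[2\omega,2\omega i]$ with $\omega=\int_1^\infty dx/\sqrt{4x^3-4x}\approx 1.311$. Then $c_1=\omega$, $c_2=i\omega$, $c_3=\omega+i\omega$. Since $g_3=0$, the lattice satisfies $i\LA=\LA$, and (\ref{eqn:homog}) gives $\pl(iz)=-\pl(z)$. Hence $\pl(c_1)=1$, $\pl(c_2)=-1$ and $\pl(c_3)=0$.

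The point $(0,0)$ is the easy case. Here $\pl(c_3)=0\in\LA$ is a pole, so $c_3$ is a prepole. Prepoles lie in $J(\pl)$ because the iterates are not even defined on any neighbourhood. Therefore $(0,0)\in\widehat{J}(E(\Q))$.

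For $(1,0)$ I would work on the real interval $I=[1,\omega]$, where $\pl$ is real and decreasing. The key claims are $1<\pl(1)\le\omega$, so that $\pl(I)=[1,\pl(1)]\subset I$, and that $|\pl'|=\sqrt{4\pl^3-4\pl}<1$ on $I$. The second claim holds once $\pl(1)$ is below roughly $1.2$. These would give a real contraction of $I$ into itself, hence an attracting fixed point $q\in(1,\omega)$. The same derivative bound should persist on a small complex neighbourhood of $I$, putting all of $I$ in the immediate basin of $q$. Since $c_1=\omega\in I$, this gives $(1,0)\in\widehat{F}(E(\Q))$. For $(-1,0)$, the point $c_2$ maps to $-1$, and by evenness $\pl(-1)=\pl(1)\in I$. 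So $c_2$ is a preimage of a point of the basin, and Fatou sets are completely invariant. Hence $c_2\in F(\pl)$.

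The main obstacle is the numerical step: certifying $\pl(1)\le\omega$ and $\sup_I|\pl'|<1$. I would first try explicit bounds on $\pl(1)$ from the Laurent expansion $\pl(z)=z^{-2}+\tfrac{g_2}{20}z^2+\tfrac{g_2^2}{1200}z^6+\cdots$, with $g_3=0$. At $z=1$ the terms decay rapidly and give $\pl(1)\approx 1.2$. A rigorous tail estimate is then needed, for example by comparing with the lattice sum truncated to the nearest lattice points. If that estimate is too tight, I would fall back on a shorter interval $[1,1.2]$ invariant under $\pl^2$, or on locating the fixed point via $\int_{\pl(1)}^{\infty}dx/\sqrt{4x^3-4x}=1$.
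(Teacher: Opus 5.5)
Your torsion computation (passing to $Y^2=x^3-x$ and applying Nagell--Lutz), the identification $\pl(c_1)=1$, $\pl(c_2)=-1$, $\pl(c_3)=0$ via $\pl(iz)=-\pl(z)$, and the prepole argument for $(0,0)$ are all fine. The gap is in the Fatou step. The bound $\sup_I|\pl'|<1$ on $I=[1,\omega]$ is not merely hard to certify; it is false.

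Since $|\pl'(t)|^2=4\pl(t)^3-4\pl(t)$, you have $|\pl'(t)|<1$ exactly when $\pl(t)<x_*\approx 1.107$, where $x_*$ is the root of $x^3-x=\tfrac14$. Your threshold ``roughly $1.2$'' is wrong: at $\pl=1.2$ one already gets $|\pl'|=\sqrt{2.112}\approx 1.45$. Your own series gives $\pl(1)=1+\tfrac15+\tfrac1{75}+\cdots\approx 1.214$, so $|\pl'(1)|\approx 1.52$. Hence $\pl$ is not a contraction of $I$, no tail estimate can change that, and the complex-neighbourhood extension built on it fails too.

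In fact the fixed point is $q\approx 1.0965$ with multiplier about $-0.94$. So even attractivity needs you to locate $q$ below $x_*$. For example, check $\pl(1.1)<1.1$ (the series gives $\approx 1.093$). Then $q\in(1,1.1)$ and $|\pl'(q)|<\sqrt{4(1.1^3-1.1)}\approx 0.96$. This is essentially the paper's route: the intermediate value theorem on $(1,c_1)$ for the fixed point, plus a series estimate of the derivative at that point only, not on an interval.

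Putting $c_1$ into the basin then needs a separate argument. Your fallback of $\pl^2$ on $[1,1.2]$ does work numerically: $\pl^2$ maps it into roughly $[1.02,1.18]$, and $|(\pl^2)'|$ stays below $0.9$ there. A cleaner repair uses the classical fact that the immediate basin of an attracting fixed point contains a singular value.
\begin{itemize}
\item For $\pl$ the finite singular values are $1,0,-1$.
\item $0\in\LA$ is a pole, hence lies in $J(\pl)$.
\item Since $\pl(-1)=\pl(1)$, whichever of $\pm1$ the basin captures, both $\pm1\in F(\pl)$.
\item Then $c_1,c_2\in F(\pl)$ by complete invariance.
\end{itemize}
Also, once $\pl^n(c_1)\to q$ with $q$ attracting, openness of the basin already gives $c_1\in F(\pl)$. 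So no derivative bound on a complex neighbourhood of $I$ is needed. Finally, the equation $\int_{\pl(1)}^\infty dx/\sqrt{4x^3-4x}=1$ in your last fallback only defines $\pl(1)$; the fixed point solves $\int_q^\infty dx/\sqrt{4x^3-4x}=q$.
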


\begin{proof} It is known that the lattice $\LA(4,0)$ has side length
$\gamma \approx 2.62206$, where $\gamma$ denotes the lemniscate constant (see e.g., \cite{hm1}, Table 1).   It is the standard square lattice so the critical values are $1, -1$, and $0$.
We show there is an attracting fixed point between $e_1=1$, and $c_1 = \gamma/2$.  Since $\pl$ is decreasing on $I=(0,c_1)$, reaching a minimum value
of $1$ at $c_1$, we have:
$$  \pl(1) - 1 >0 \quad {\rm and} \quad  \pl(c_1) - c_1 < 0. $$  By the Intermediate Value theorem there is a point $p$ between $1$ and $c_1$ that is a fixed point.
To show it is attracting,  we estimate the derivative using the first 3 terms of the series expansion of $\pl'(t) $ at $t=0$.  Since $\pl(-z)=\pl(z)$, Both $(-1,0)$ and $(1,0)$ are in the Fatou set of $E(\Q)$.  Since
$(0,0)$ is a pole, it is in $\widehat{J}(E(\Q))$ (see Figure \ref{fig:Juliacurve}).  The last statement follows from the result for the Diophantine equation $\ds y^2 = x^3 - x$.
\end{proof}

\noindent{\bf Rectangular toral bands with integer points.}\quad We consider $\LA = \LA(11,-7)$; it is a standard, horizontal $2 \times 1$ lattice.  Therefore $e_1 = 1$, and $e_2 \approx .914$, and consequently $e_3 \approx -1.914$ (see \cite{hk2}, Section 10 where this example appears).

The critical point $[c_1]$ gives rise to the integer point $A= (1,0)$ on the elliptic curve. Since $[2 c_1]=[0] (\mod \LA)$, $2A = \orb$.
 Additionally, there is an attracting fixed point for $\pl$ at around $p \approx 1.18.$  The attracting basin of $p$ contains both $e_1$ and $e_2$ so we have a toral band
 containing an integer point.   Therefore $(0,1) \in \widehat{F}(E(\Q)$.  We have not found any other integer points for this example.

For $(g_2,g_3) =(15,-11)$, also a standard lattice with a toral band, the Julia set is shown in Figure \ref{fig:toralband}.  For this lattice there is an attracting fixed point $p \approx 1.27 > e_1$, which attracts 3 integer points:  $(1,0), (-2,3)$, and $(-2,-3)$.   Since $\LA$ is a standard lattice, the point $A=(1,0)$ is a torsion point.  Using
the point doubling formula, we easily compute that $B=(-2,3)$ is not.  One other noteworthy thing is that $A$ comes from $[c_3]$, located in the component of $E(\Q)$ not containing the fixed point.  In Figure \ref{fig:toralband} we see the two components of the curve are close but do not touch.

\begin{proposition}\label{prop:squaretorsion} For every lattice $\LA=\LA(g_2,0)$ (real, square) the point $A=(0,0)$ is always a torsion point of $E_{(g_2,0)}(\Q)$ of order 2.
The point comes from $c_1 = \lambda_1/2 \in \R$ if $g_2>0$ and $c_3 = (\lambda_1+\lambda_2)/2$ if $g_2<0$.  The point $A \in \widehat{J}(E_{(g_2,0)}(\Q)).$
If $g_2 = 4 k^2, \, k \in \N$, there are two more torsion points:  $B =(0,k), C=(0,-k)$. The points $B, C \in  \widehat{F}(E_{(g_2,0)}(\R))$ if $\pl$ has an attracting cycle.
 \end{proposition}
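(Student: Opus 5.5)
The plan is to work directly with the cubic $p(x)=4x^3-g_2x=4x\,(x^2-g_2/4)$, which is what (\ref{eqn:cubicpoly}) becomes when $g_3=0$. Since $p(0)=0$, the point $A=(0,0)$ lies on $E_{(g_2,0)}(\Q)$ with $y$-coordinate $0$. So $0$ is a critical value of $\pl$, and by the Lemma characterizing order 2 torsion points, $A$ is a torsion point of order 2. Concretely, $A=[c]$ for a half lattice point $c$, and $2c\in\LA$ gives $2A=\orb$.

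To locate $c$, I will use the square symmetry. Since $g_3=0$, we have $i\LA=\LA$, and (\ref{eqn:homog}) with $k=i$ gives $\pl(iz)=-\pl(z)$. Multiplication by $i$ permutes the three half-period classes. The class fixed by it must have critical value $e$ with $e=-e$, i.e.\ $e=0$; the other two classes are swapped, giving values $\pm\sqrt{g_2}/2$.
\begin{itemize}
\item If $g_2>0$, all roots are real and $\LA$ is a real square lattice.
\item If $g_2<0$, the roots are $0,\pm i\sqrt{|g_2|}/2$, so $\LA$ is the rotated (rhombic) square lattice and the only real critical value is $0$.
\end{itemize}
In each case I will identify which of $c_1,c_3$ is invariant under $z\mapsto iz \pmod{\LA}$ for the normalization of generators used in the statement. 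I expect this bookkeeping of which half period carries the value $0$ to be the fiddliest point, because it depends on the labeling convention.

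Next, $A\in\widehat{J}(E(\Q))$. We have $\pl(c)=0\in\LA$, and $0$ is a pole of $\pl$, so $c$ is a prepole. Poles and their preimages lie in $J(\pl)$, since the iterates are not even defined in any neighborhood of $c$. Hence $[c]\in J(\pl)$, and $A$ is a Julia point.

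If $g_2=4k^2$, then $p(x)=4x(x-k)(x+k)$, so the other two critical values are the integers $\pm k$. This gives two further order 2 torsion points where the curve crosses the $x$-axis. I will read the statement's $B,C$ as the points over $x=\pm k$, i.e.\ $(\pm k,0)$, since $(0,\pm k)$ does not satisfy the curve equation when $p(0)=0$.

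For the Fatou claim, suppose $\pl$ has an attracting cycle. Since $\pl$ is of Class $S$, the immediate basin of that cycle must contain a critical value, by the classical Fatou argument that underlies Theorem~\ref{thm:fatoubound}. That critical value cannot be $0$, because $0$ is a prepole and lies in $J(\pl)$, so it is $k$ or $-k$. Because $\pl$ is even, $\pl(-k)=\pl(k)$, so $k$ and $-k$ have the same forward orbit from the first iterate on. Thus both are attracted to the cycle and lie in its basin, which is an open subset of $F(\pl)$. Hence $B,C\in\widehat{F}(E_{(g_2,0)}(\R))$. The only nonroutine input here is the standard fact that every attracting cycle attracts a singular value, which I will cite rather than reprove.
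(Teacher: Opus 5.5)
The step that fails is the one you defer: locating the half period that carries $A$. It does not depend on a labeling convention, and your own symmetry test contradicts the statement for $g_2>0$.

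\begin{itemize}
\item \textbf{Case $g_2>0$.} Here $\LA=[a,ai]$ with $a>0$, so any generator $\lambda_1\in\R$ is $\pm a$. With the paper's rectangular convention, $c_1=a/2$, $c_2=ai/2$, $c_3=(a+ai)/2$. Then $ic_1=c_2$, whereas $ic_3=c_3-a\equiv c_3 \pmod{\LA}$. So the class fixed by $z\mapsto iz$ is $[c_3]$, giving $\pl(c_3)=0$. Meanwhile $\pl(c_1)=\sqrt{g_2}/2$ is the minimum of $\pl$ on $\R$; compare Proposition~\ref{prop:standardsquare}, where $\pl(c_1)=1$ for $\LA(4,0)$.
\item \textbf{Case $g_2<0$.} Here $\LA=[a+ai,a-ai]$ and $ic_3=ai\equiv a=c_3$, since $a-ai\in\LA$. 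So the statement is correct in this case.
\end{itemize}

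Hence $A$ comes from $c_3=(\lambda_1+\lambda_2)/2$ in both cases. The clause ``from $c_1=\lambda_1/2\in\R$ if $g_2>0$'' is false under any choice of basis, because $\lambda_1/2=\pm a/2$ is never a zero of $\pl$. You should flag this as an error in the statement, as you did for $(0,\pm k)$, rather than promise to verify it. The paper's proof does not resolve this either. It only cites Theorem~\ref{thm:critvals}, which for $g_3=0$ (i.e.\ $n=-m$) covers only $g_2=4m^2$ and does not say which half period carries $0$.

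The rest of your proposal is sound.
\begin{itemize}
\item Your $p(0)=0$ argument gives the order-2 claim for every $g_2$, not just $g_2=4m^2$.
\item The prepole argument gives $A\in\widehat{J}$.
\item Your reading $B,C=(\pm k,0)$ is the only one consistent with $y^2=4x(x-k)(x+k)$.
\end{itemize}

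For the Fatou claim you take a different route from the paper. The paper invokes (\ref{eqn:pprimeprime}) and Lemma 6.5 of \cite{hk1} to put $c_1$ in the immediate basin, and leaves $C$ implicit. Your argument is self-contained and treats $B$ and $C$ at once: some critical value lies in the immediate basin, it cannot be $0$ because $0$ is a pole, and $\pl(k)=\pl(-k)$. It yields only membership in the full basin rather than the immediate one, but that suffices. To match the intended meaning of $\widehat{F}$ (the underlying point lies in $F(\pl)$), add one line: $c_1$ and $c_2$ are in the basin because their images $k$ and $-k$ are.
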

 \begin{proof} The first two statements follow from Theorem \ref{thm:critvals}.  If $g_2 = 4 k^2$, then by (\ref{eqn:pprimeprime}) and applying
 (\cite{hk1}, Lemma 6.5),  the critical point $c_1$ lies in the immediate attracting basin of the cycle, hence $B, C \in  \widehat{F}(E_{(g_2,0)}(\R))$.\end{proof}

\begin{figure}
\begin{center}
{\includegraphics[width=.95\textwidth]{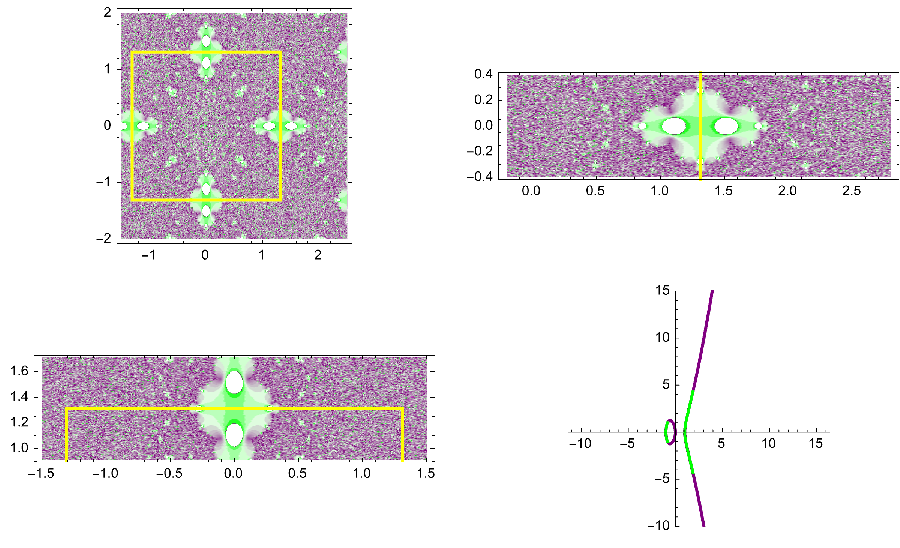}}
\caption{We show $J(\pl)$ for the real square lattice $\LA(4,0)$ (upper left), with its associated real elliptic curve (lower right).
In the upper right (near real axis)  and lower right (near a half lattice line) we show real-valued points in $\C$ that are mapped onto the elliptic curve.  We color points green if
they are the image of the green Fatou component, intersecting $\R$ or a horizontal half lattice line.} \label{fig:Juliacurve}
\end{center}
 \end{figure}

\section{Rhombic lattices with integer points}

The case of a rhombic square lattice was studied in \cite{hawkins0}, where it was shown that
for every rhombic square lattice $\LA$, $J(\pl) = \ci$.  Additionally we have that
the critical values are
\begin{equation} \label{eqn:rhombicsquare} e_1 = \alpha i, \, e_2 = -\alpha i, \alpha = \frac{\sqrt{|g_2|}}{2} >0, \, \, {\rm and} \, e_3 = 0.\end{equation}

 Dynamically, this means the following:  (1)  There is exactly one real critical value, $0=e_3$  corresponding to
$\pl(\lambda+i \lambda)/2)= \pl(c_3)$ giving the integer point $(0,0)$;   (2)$e_3 \in J(\pl)$.

We have therefore shown the following.
\begin{proposition} Elliptic curves of the form $E_{(g_2,0)}(\Q))$, $g_2<0$,  have no integer points in $\widehat{F}(E(\Q))$.
 \end{proposition}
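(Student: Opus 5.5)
The plan is to reduce the statement to the theorem from \cite{hawkins0} quoted just above: for every rhombic square lattice $\LA$ one has $J(\pl)=\ci$, so the Fatou set is empty. Once we know that every lattice $\LA(g_2,0)$ with $g_2<0$ is a real rhombic square lattice, no point $z\in\C$ can lie in $F(\pl)$. Consequently $\widehat{F}(E(\Q)) = F(\pl)\cap \Q=\emptyset$, and in particular it contains no integer points. (The only integer point visibly forced by the structure is $(0,0)$, coming from $c_3$ by Proposition \ref{prop:squaretorsion}, and it lies in $\widehat{J}$.)

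The first step is to show that $g_3=0$ with $g_2<0$ gives a rhombic square lattice. The paper states that a real lattice is square exactly when $g_3=0$, so it remains to see why $g_2<0$ forces the rhombic rather than the rectangular orientation. For this I would use the cubic $p(x)=4x^3-g_2x = 4x\,(x^2-g_2/4)$. When $g_2<0$, its roots are $0$ and $\pm i\sqrt{|g_2|}/2$, which agrees with (\ref{eqn:rhombicsquare}). A real rectangular lattice has three real critical values, since all three half-lattice points $a/2$, $ib/2$, $(a+ib)/2$ are mapped to $\R$. Because only one critical value here is real, the lattice must be real rhombic.

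To see that it is square, I would use the homogeneity (\ref{eqn:homog}) with $k=e^{i\pi/4}$. This gives $g_2(k\LA)=k^{-4}g_2(\LA)=-g_2$ and $g_3(k\LA)=0$. So $\LA(g_2,0)$ with $g_2<0$ is the rotation by $45^\circ$ of the square rectangular lattice $\LA(|g_2|,0)$, and that rotation is a rhombic square lattice.

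Next, I would invoke \cite{hawkins0} to get $F(\pl)=\emptyset$, and conclude from the definition (\ref{eqn:JuliaFatou}) that $\widehat{F}(E(\Q))=\emptyset$. As a consistency check, for any integer point $(\pl(z),\pl'(z))$ the underlying $z$ lies in $J(\pl)$. For $(0,0)$ this can be seen directly, since $e_3=\pl(c_3)=0$ and $\pl(0)=\infty$, so the critical value is prepole and hence in $J(\pl)$.

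The main obstacle is the identification of the lattice type, in particular keeping the sign conventions on $g_2$ under the rotation consistent. Beyond that, the argument rests entirely on the cited result $J(\pl)=\ci$, which we take as given.
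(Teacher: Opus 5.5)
Your proposal is correct and follows the paper's route: $\LA(g_2,0)$ with $g_2<0$ is a rhombic square lattice, so the result of \cite{hawkins0} gives $J(\pl)=\ci$, hence $F(\pl)=\emptyset$ and $\widehat{F}(E(\Q))=\emptyset$. Your homogeneity argument with $k=e^{i\pi/4}$ (giving $k\LA(g_2,0)=\LA(|g_2|,0)$) and your count of real critical values supply the lattice identification that the paper states without proof.
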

 However, nontrivial torsion groups always occur by Proposition \ref{prop:squaretorsion}, and the values of $\pl$ at quarter lattice points were analyzed in \cite{hawkins0}  leading to the following result.

 \begin{proposition}\label{prop:rhomibicsq}   If $\Lambda = \Lambda(g_2,0)$ with $g_2 = -16 k^4, k\in \N$, then the elliptic curve $E_{\{g_2,0\}}(\Q)$ contains a torsion subgroup
isomorphic to $\Z_4$. \end{proposition}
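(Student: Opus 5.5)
The plan is to exhibit explicitly a rational (in fact integer) point $P$ on $E_{(g_2,0)}(\Q)$ whose double is the order 2 point $A=(0,0)$ of Proposition \ref{prop:squaretorsion}. Then $P$ has order exactly $4$, and $\{\orb, P, A, -P\}$ is a cyclic subgroup of order $4$. By Proposition \ref{prop:toralgroup} this subgroup lies in $E_{(g_2,0)}(\Q)$, and since its elements have finite order it lies in $E_{\rm tor}(\Q)$.

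With $g_2=-16k^4$ and $g_3=0$, the cubic (\ref{eqn:cubicpoly}) is $p(x)=4x^3+16k^4x$. I would guess the $x$-coordinate of $P$ as a root of the numerator of the doubling formula (\ref{eqn:double}), because $\pl(2z)=0=e_3$ is exactly the condition $2P=A$. That numerator is
$$x^4+\tfrac12 g_2x^2+\tfrac1{16}g_2^2 \;=\; x^4-8k^4x^2+16k^8 \;=\; (x^2-4k^4)^2,$$
so $x=\pm 2k^2$. For $x=2k^2$ we get $p(2k^2)=32k^6+32k^6=64k^6$, hence $y=\pm 8k^3$. Thus $P=(2k^2,8k^3)$ is an integer point on the curve. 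At this value the denominator $p(x)$ of (\ref{eqn:double}) is nonzero, so $P$ is not itself of order $2$. The doubling formula then gives $x(2P)=0$, so $2P=(0,0)=A$. Since $A\neq\orb$ and $2A=\orb$, $P$ has order $4$.

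To connect with the lattice picture, as the paper does via \cite{hawkins0}, I would identify $P$ with $(\pl(z_0),\pl'(z_0))$ for a quarter lattice point $z_0$ with $2z_0\in[c_3]$. This is the point of the torus where $\pl$ takes the real value $2k^2$. It is consistent with $c_3$ giving $(0,0)$ in the rhombic square case, as in (\ref{eqn:rhombicsquare}). This identification is not logically needed: the group law on $E(\Q)$ from the subsection on elliptic curves suffices.

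The main obstacle is only bookkeeping: making sure the doubling formula (\ref{eqn:double}) is applied with the correct sign conventions for $g_2<0$, and confirming that $P\ne -P$, i.e. $y\neq 0$, so that the subgroup genuinely has four distinct elements. Both are immediate from the computation above, since $8k^3\neq 0$ for $k\in\N$.
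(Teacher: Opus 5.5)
Your proof is correct. Its skeleton matches the paper's: both exhibit $Q=(2k^2,\pm 8k^3)$, note $2Q=(0,0)$, and conclude $\langle Q\rangle\cong\Z_4$. The difference is in how the point is found and how $2Q=(0,0)$ is certified.

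The paper works on the torus. With $\Lambda=[a+ia,a-ia]$ it takes the real quarter-lattice point $c_3/2=a/2$, imports $\pl(c_3/2)=\sqrt{-g_2/4}=2k^2$ from Lemma 3.2 of \cite{hawkins0}, and gets $\pl'$ from (\ref{eqn:mainid}). Then $2Q=(0,0)$ is automatic, because $2\cdot(c_3/2)=c_3$ and $e_3=0$.

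You work on the curve instead. You solve $x(2P)=0$ in the duplication formula (\ref{eqn:double}), whose numerator collapses to $(x^2+g_2/4)^2=(x^2-4k^4)^2$ when $g_3=0$.

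Your route is self-contained: it needs no external lemma, only a formula stated in the paper. It also makes the role of the hypothesis transparent. The candidate is $x=\sqrt{-g_2}/2$, where $p(x)=8x^3$, and $g_2=-16k^4$ is what makes $x=2k^2$ an integer and $8x^3=(8k^3)^2$ a perfect square.

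The paper's route buys the location of the torsion point on $\C/\Lambda$: it is a real point on the real period segment, which fits the paper's dynamical viewpoint.

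Incidentally, taking $a>0$, $\pl$ decreases from $+\infty$ to $e_3=0$ on $(0,c_3)$, so in fact $\pl'(c_3/2)=-8k^3$ rather than $8k^3$. This sign slip in the paper is harmless, and your $\pm$ treatment sidesteps it.
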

 \begin{proof}  We write the lattice by $\Lambda = [a+ia, a-ia]$, and label the critical points by $c_1 = a/2 + i a/2, c_2= a/2 - i a/2$, and $c_3 = a$.
 Then the nonzero critical values are
 $ \pm 2 k^2 i$ by (\ref{eqn:rhombicsquare}).
 Applying (\cite{hawkins0}, Lemma 3.2) and  using (\ref{eqn:mainid}),
 $$ \pl (c_3/2) = \sqrt{-g_2/4} = 2 k^2,\quad {\rm and} \quad   \pl ' (c_3/2) = 8 k^3,$$
both integers.
 Therefore if we label $Q = (\pl(c_3/2),\pl'(c_3/2)) = (2 k^2, 8 k^3),$ we have
 $2Q = (0,0), 3Q=(2 k^2, -8 k^3),$ and $4Q = (\infty,\infty)$, corresponding to the identity element $\orb$. Since $Q$ generates a group of order 4, it is isomorphic to $\Z_4$.
 \end{proof}

\noindent {\bf Example.} The rhombic square lattice $\LA(-12,0)$ is not of the form in Proposition \ref{prop:rhomibicsq}, but it has many integer points.  One can verify directly from the equation
$\ds y^2 = 4 x^3 + 12x $ that points $$(x,y) = (1,\pm 4),(3,\pm 12),(12, \pm 84)$$ are all integer points.  However if $A = (1,4)$, we see that $2A$ has $\frac 14$ as its first coordinate, so the rank is at least 1.  In fact, the rank is known to be one and the torsion subgroup has exactly two elements in it: $(0,0)$ and $\orb.$

\subsubsection{Triangular lattices}

A real lattice $\Lambda$ is {\em triangular} if and only if $g_2=0$.  A fundamental region is two equilateral triangles, with a common side, the shorter diagonal of the rhombus being the same length as $\lambda_1$ and $\lambda_2$.   $\Lambda$ is generated
by 
$$\lambda_1= a (\sqrt{3} + i), \quad {and} \quad \lambda_2= a (\sqrt{3} -i),  \quad a>0.$$   A real triangular lattice is vertical if $g_3>0$,  and horizontal if $g_3<0$.
Writing as before, $c_1= \frac 12 \lambda_1 = b+ ia$,  $c_2= \frac 12 \lambda_2 = b -  ia$, when $\LA$ is horizontal, $e_3 <0$ and therefore $\pl$ has 2 real zeros
The Julia set of $\pl$ when $\LA$ is triangular is always connected (\cite{hk3}, Corollary 3.3), and studies have been done on the dynamics of $\pl$ in this case \cite{hk2}.

\begin{figure}
\begin{center}
{\includegraphics[width=.95\textwidth]{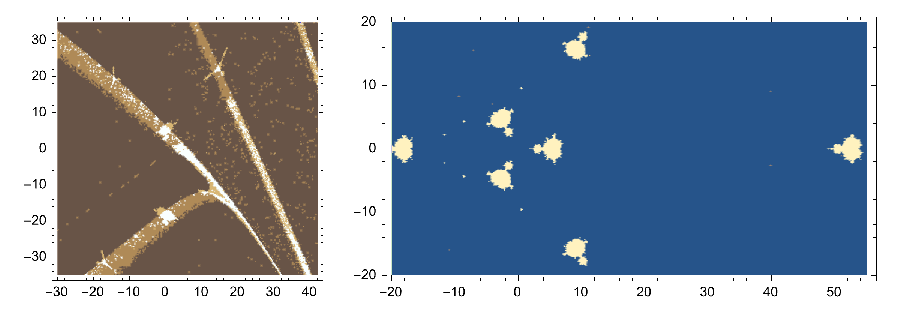}}
\caption{On the left is real $(g_2,g_3)$ space, where light colored points are pairs $(g_2,g_3)$ with attracting periodic orbits.  On the right, we show $g_3$ space, and look near the horizontal axis at the real integer values of $g_2$. These figures appeared over 20 years ago in \cite{hk3} to study purely dynamical properties, but can be compared with integer points for the corresponding elliptic curves.} \label{fig:triangle}
\end{center}
 \end{figure}

\begin{figure}
\begin{center}
{\includegraphics[width=.95\textwidth]{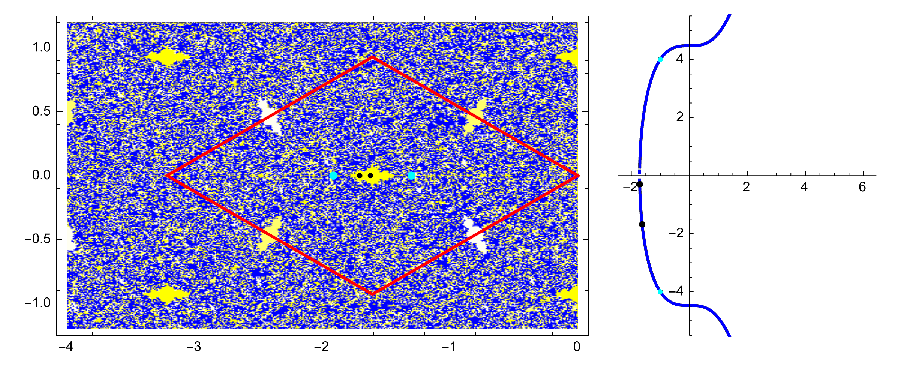}}
\caption{As suggested by the left side of Figure \ref{fig:triangle}, using $g_3=-20$ yields a triangular lattice for $\pl$ with an attracting period 2 orbit.
on each, the cyan points mark integer points and the black points show an attracting period 2 orbit.} \label{fig:per2}
\end{center}
 \end{figure}

To obtain triangular lattices with order 2 torsion points we exploit the symmetry in triangular lattices. An order 2 torsion point must correspond to a real critical point in $\C$ if $\Lambda$ is not rectangular. Setting $\rho=e^{2 \pi i/3}$, the critical values of  $\pl$ are $e_1$, $\rho e_1$, and $\rho^2 e_1$.   Moreover, by (\ref{es}), $ \ds e_1 e_2 e_3 = g_3/4.$  Therefore obtaining a 2-torsion point of the form $A=(k,0)$ follows naturally from these remarks.

\begin{lemma}\label{lem:triangle} Setting
$\LA = \LA(0,g_3)$ with $g_3 = 4 k^3$, $k \in \Z$, $k \neq 0$, gives an integer point $(k,0)$ associated to the real critical point;  $k$ can be positive or negative. \end{lemma}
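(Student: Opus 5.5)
With $g_2=0$ and $g_3=4k^3$, the cubic (\ref{eqn:cubicpoly}) is $p(x)=4x^3-4k^3$. Since $k\in\Z$, we have $p(k)=0$, so $k$ is a real root of $p$. Factoring gives
$$p(x)=4(x-k)(x^2+kx+k^2),$$
and the quadratic factor has roots $k\rho$ and $k\rho^2$ with $\rho=e^{2\pi i/3}$. These are non-real because its discriminant is $-3k^2<0$ for $k\neq 0$. So the three critical values are $e_1=k$, $\rho k$, $\rho^2 k$, in agreement with the triangular symmetry described before the lemma. They satisfy (\ref{es}): their sum is $0$, their pairwise product sum is $0=-g_2/4$, and their product is $k^3=g_3/4$. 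The discriminant is $\bigtriangleup=0-27\cdot 16k^6\neq 0$, so by the existence result cited from \cite{duv} there is a lattice $\LA(0,g_3)$. It is real since $g_2,g_3\in\R$, and triangular since $g_2=0$.

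Next I identify which half-lattice point gives the value $k$. Since $\LA$ is real, $\pl(\overline z)=\overline{\pl(z)}$, so complex conjugation permutes the three classes $[c_j]$ and the corresponding critical values. The only real critical value is $k$, so its class is fixed by conjugation. For the rhombic generators $\lambda_{1,2}=a(\sqrt3\pm i)$, conjugation swaps $c_1$ and $c_2$ and fixes $[c_3]$, where $c_3=(\lambda_1+\lambda_2)/2=a\sqrt3\in\R$. Hence $\pl(c_3)=k$. (If $k<0$, one can instead use the rotation/scaling relation (\ref{eqn:homog}) with $\lambda\mapsto -\lambda$, or simply repeat the same argument; the sign of $k$ matches whether the lattice is vertical or horizontal.)

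Finally, $c_3$ is a critical point, so $\pl'(c_3)=0$, and $(\pl(c_3),\pl'(c_3))=(k,0)$ is an integer point. It has order $2$ in $E_{(0,4k^3)}(\Q)$ because $2c_3\in\LA$. The step needing the most care is matching the real critical value to the real half-lattice point $c_3$ rather than to $c_1$ or $c_2$. I would settle this through the conjugation-invariance argument above instead of computing $\pl$ explicitly.
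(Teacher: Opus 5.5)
Your proof is correct and follows essentially the paper's route: the paper takes the critical values to be $e_1,\rho e_1,\rho^2 e_1$ from the triangular symmetry and uses $e_1e_2e_3=g_3/4=k^3$ to get $e_1=k$, which is what your factorization $p(x)=4(x-k)(x^2+kx+k^2)$ shows, and your conjugation argument makes explicit the paper's remark that the order-2 point sits at the real critical point $c_3$. One slip is harmless, since your conjugation argument uses only $\lambda_2=\overline{\lambda_1}$ and so covers both signs of $k$: with the paper's convention (vertical iff $g_3>0$), the generators $a(\sqrt3\pm i)$ give the horizontal lattice, i.e.\ the case $k<0$, and the map relating the two signs is $\LA\mapsto i\LA$ (which sends $g_3\mapsto -g_3$), not $\lambda\mapsto-\lambda$, since $-\LA=\LA$.
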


Using a Mathematica program, we see that the integers
$$\{6, 7, 11, 13, 14, 20, 21, 23, 29, 32, 34, 39, 42, 45, 46, 47\}$$ are the positive integers less than $50$ for which the equation
$\ds  y^2 =x^3 + n$
has no integer solutions. Otherwise, there are solutions (see \cite{silvermantate}).  Multiplying the list by 4 and looking at Figures \ref{fig:triangle}, we obtain triangular lattices $\LA(g_2,g_3)$ for which $\pl$ has an attracting periodic orbit, and integer points.  We ask if $F(\pl) \neq \emptyset$ when are there integer points in $\widehat{F}(E_{(g_2,g_3)}(\Q))$.

An illustrative example is $(g_2,g_3) = (0,-20)$.  The associated equation is $y^2 = 4 x^2 + 20.$
The lattice $\LA(0,-20)$ has a rhombic fundamental region with side length approximately $1.86$.  The set $J(\pl)$ for this lattice  has a real attracting period 2 orbit near $e_3 \approx -1.71$, the period 2 orbit being closeby. The points in the attracting orbit are very near each other.  Calculations show there is a point $q_1 \approx -1.30$ such that $\pl(q_1)=-1$ and $\pl'(q_1) =4$.  We can show that $q_1$ is not in the immediate attracting basin of the period 2 orbit, but is mapped to it.  So $q_1 \in F(\pl)$, and  since integer points occur in pairs,
by setting $d_1= q_1-c_3$, and setting $q_2 = c_3 - d_1$, we see that $\pl(q_2) = -1$ while $\pl'(q_2) = -4.$  By symmetry of the Julia and Fatou sets, both integer points are in $F(\pl)$, and this gives $A=(-1,4), B=(-1,-4)$ with $A,B \in \widehat{F}(E(\Q))$.  Using (\ref{eqn:double}) we determine that $q_1$ does not have finite order, since $\pl(2 q_1)$ is not an integer.

The following theorem gives sufficient conditions for an order 3 torsion point for a triangular lattice.
\begin{theorem}  Suppose $\LA$ is a real triangular lattice with $g_3= - k^2$, $k \in \N$.  Then there are integer points of the form $(0,\pm k)$ and they form a torsion subgroup of order 3 (with $\orb$). If $g_3 = -4$, we also obtain an order 2 point $(1,0)$ and $E_{(0,-4)} = \Z_6$. \end{theorem}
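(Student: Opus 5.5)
The plan is to work directly with the Weierstrass equation (\ref{eqn:mainid}) for $g_2=0$, namely $y^2 = 4x^3 + k^2$, and to use the group law through the doubling formula (\ref{eqn:double}). First, $x=0$ gives $y^2 = -g_3 = k^2$. So the points $P=(0,k)$ and $-P=(0,-k)$ lie on $E_{(0,-k^2)}(\Q)$. The real lattice with these invariants is triangular and horizontal, since $g_3<0$ and $g_2=0$. Hence there are points $z\in\C/\LA$ with $\pl(z)=0$ and $\pl'(z)=\pm k$. These are the two real zeros of $\pl$ noted in the discussion of horizontal triangular lattices.

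To show $P$ has order $3$, I will check that $2P=-P$. Substituting $x=0$, $g_2=0$ into (\ref{eqn:double}) gives
$$\pl(2z) = \frac{0 + 0 + 0 + 0}{-g_3} = 0.$$
So $2P$ has first coordinate $0$, which means $2P=\pm P$. Since $P\neq \orb$ and $y=k\neq 0$, $P$ is not $2$-torsion. Thus $2P\neq \orb$, and if $2P=P$ then $P=\orb$, a contradiction; so $2P=-P$. Geometrically, the tangent line at $(0,k)$ has slope $12x^2/(2y)=0$, so it meets the curve with multiplicity $3$ at $P$, and $P$ is a flex. Either way, $3P=\orb$, and $\{\orb, P, -P\}\cong \Z_3$ is a torsion subgroup. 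I expect this to be quick. The only care needed is justifying $2P=-P$ rather than $2P=P$, and the flex argument handles that cleanly.

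For $g_3=-4$ (so $k=2$), Lemma \ref{lem:triangle} with $k=-1$ (since $4k^3=-4$) gives the real critical value $e=1$: indeed $4\cdot 1 - 4 = 0$. So $A=(1,0)$ is a point of order $2$. Together with $P=(0,2)$ of order $3$, the element $A+P$ has order $6$, and $\Z_6\subseteq E_{\rm tor}(\Q)$.

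The main obstacle is showing the torsion group is exactly $\Z_6$, which I interpret as the intended meaning of $E_{(0,-4)}=\Z_6$. My plan is as follows.
\begin{enumerate}
\item Rescale to the model $Y^2=X^3+4$ via $X=... $, or more simply note that $y^2=4x^3+4$ becomes $Y^2=X^3+16$ with $X=4x$, $Y=4y$. Alternatively, use $x\mapsto x$, $y=2Y$, giving $Y^2=x^3+1$, which is the classical Mordell curve.
\item For $Y^2=x^3+1$, it is standard that $E(\Q)\cong\Z_6$ with points $(-1,0),(0,\pm1),(2,\pm3)$ and $\orb$. This can be cited from \cite{silvermantate}, or verified via Nagell--Lutz: the torsion points have integer coordinates with $Y=0$ or $Y^2\mid 27$, which yields exactly these points.
\item Pull these points back to $(1,0)$, $(0,\pm2)$ and their sums. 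Note the sign change in $x$ for this model, so I would double-check the substitution $x\mapsto -x$, $y=2Y$ converting $y^2=4x^3+4$ up to the orientation.
\end{enumerate}
If rank zero is required, I would also cite the known rank $0$ of this curve from \cite{silvermantate}.
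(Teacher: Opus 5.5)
Your argument for the order-$3$ points is correct, but it follows a different route from the paper. The paper places the zeros of $\pl$ at the centroids $\pm\frac23 c_3$ of the two equilateral triangles in the fundamental rhombus, so $3\cdot\frac23 c_3=2c_3\in\LA$ is a pole. You instead get $2P=-P$ from (\ref{eqn:double}) with $x=g_2=0$, or from the flex at $(0,k)$. Your route uses only $g_2=0$ and $-g_3=k^2$ and needs no lattice geometry. The paper's route also locates the integer points in $\C$, which its dynamical discussion uses.

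The $g_3=-4$ part, however, contains a genuine error. The curve is $y^2=p(x)=4x^3-g_3=4x^3+4$, so $p(1)=8$ and $(1,0)$ does not lie on it. Your check ``$4\cdot 1-4=0$'' has the sign of $g_3$ reversed. Lemma \ref{lem:triangle} with $4k^3=-4$ gives $k=-1$, that is, the real critical value $-1$ and the $2$-torsion point $(-1,0)$, not $e=1$. This is also what the paper's proof actually obtains when it invokes Lemma \ref{lem:triangle}. So the ``$(1,0)$'' in the statement is a misprint for $(-1,0)$, and you should say so rather than bend the computation to fit it.

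For the same reason, step (3) cannot produce $(1,0)$. The substitution $y=2Y$ sends $(-1,0)$ on $Y^2=x^3+1$ to $(-1,0)$. The map $x\mapsto -x$ is not an isomorphism here, since it turns $y^2=4x^3+4$ into $y^2=-4x^3+4$. Also, $X=4x$, $Y=4y$ gives $Y^2=X^3+64$, not $X^3+16$.

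Once the point is corrected, your plan works. The point $(-1,0)+(0,2)=(2,-6)$ has order $6$. Nagell--Lutz on $Y^2=x^3+1$, together with rank $0$, then gives $E_{(0,-4)}(\Q)=\{\orb,(-1,0),(0,\pm2),(2,\pm6)\}\cong\Z_6$. This equality is stronger than what the paper's proof shows, since it only exhibits a subgroup isomorphic to $\Z_6$.
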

\begin{proof}  Under the assumptions given, since $g_3 <0$, we have that $e_3 < 0$ and a fundamental region is a horizontal rhombus.  Therefore there exist two real points $t_1,t_2$ in each fundamental parallelogram
where $\pl(t_j)=0$.  At these values of $t_j$, the points $(0, \pm k)$ satisfy $\ds y^2 = 4\cdot 0 + k^2 = k^2$; by (\ref{eqn:mainid}), $\pl'(t_j)= \pm k$.  If $A$ is an integer point of
order 3, then $3A = \orb$ or equivalently $\pl(2 t) = -\pl(t) (\bmod \Lambda)$. By symmetry of the lattice, the zeros of $\pl$ for a triangular lattice are located in the centers of the equilateral triangles that make up a fundamental rhombic region, which,  in this case are real and located
at $\pm (2/3) c_3$.  Setting $A = (0, -k)$, $2A =(0,k)$ is at $(4/3) c_3 = - (2/3) c_3 (\bmod \LA)$, (see \cite{duv}, Chap. 2.21).  The point $3A = \orb$  since $(6/3) c_3$ is a pole.

If $g_3=-4$, then both the hypotheses of Lemma \ref{lem:triangle} and the theorem are satisfied, so we obtain a torsion subgroup of $\Z_6$.

\end{proof}

\subsubsection{Rhombic toral bands}
Using invariants $g_2=26, g_3=-26$, we claim that integer points occur simultaneously with toral bands for a rhombic lattice.  We make a few observations:  first, the associated lattice is rhombic since $\bigtriangleup(g_2,g_3)<0$, see also Figure \ref{fig:rhombictoral}. A similar example was studied in (\cite{hk3}, Example 3.6), a key difference being that by choosing close but non-integer $g_2$ (26.56) and $g_3$ (-26.26), the underlying lattice was real retangular, not rhombic.  However the dynamics remain the same while integer points appear.  We prove the following lemma.

\begin{lemma}  \label{lem:oppositeg2g3} Assume $\LA=\LA(g_2,g_3)$ with $g_3=-g_2$, with $g_2 \in \N$, $g_2 < 27$.  Then the points $A=(1,2), B=(1,-2)$ are integer points. \end{lemma}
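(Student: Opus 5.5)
The plan is to check directly that $(1,\pm 2)$ satisfies the cubic relation (\ref{eqn:mainid}), and then to show that these coordinates are actually attained as $(\pl(z),\pl'(z))$ for some $z \in \C/\LA$.

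\textbf{Step 1: the Diophantine identity.} With $g_3=-g_2$, the polynomial (\ref{eqn:cubicpoly}) becomes $p(x)=4x^3-g_2x+g_2$. Hence $p(1)=4-g_2+g_2=4$, independently of $g_2$. So $(x,y)=(1,\pm 2)$ solves $y^2=p(x)$ for every admissible $g_2$.

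\textbf{Step 2: nondegeneracy.} We need $\LA(g_2,-g_2)$ to exist, that is, $\bigtriangleup=g_2^3-27g_2^2=g_2^2(g_2-27)\neq 0$. For $g_2\in\N$ with $g_2<27$ this holds, and in fact $\bigtriangleup<0$. By the existence result quoted from \cite{duv}, there is a real lattice with these invariants, and since $\bigtriangleup<0$ it is rhombic. This is exactly why the hypothesis $g_2<27$ appears. The value $g_2=27$ is excluded because it gives $\bigtriangleup=0$.

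\textbf{Step 3: realizing the point on the torus.} Since $\pl:\C/\LA\to\ci$ is surjective of degree 2, there is some $z$ with $\pl(z)=1$. Then by (\ref{eqn:mainid}), $\pl'(z)^2=p(1)=4$, so $\pl'(z)=\pm 2$. Because $\pl$ is even and $\pl'$ is odd, the point $-z$ gives the opposite sign. Both $A=(1,2)$ and $B=(1,-2)$ therefore occur.

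To tie this to the real curve $E(\R)$, one can add a short remark. In the rhombic case $p$ has a single real root $e_r$, and $\pl$ maps $\R$ onto $[e_r,\infty]$. Since $p(1)=4>0$ and $p(x)\to-\infty$ as $x\to-\infty$, we get $1>e_r$. Hence $z$ may be chosen real, and $\pl'(z)=\pm2$ is then automatically real.

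\textbf{Main obstacle.} None of this is hard. The only point needing care is Step 2: confirming that the lattice exists and is rhombic, and ensuring that the preimage of $1$ is a genuine point rather than a pole. The latter is immediate, since poles map to $\infty$.
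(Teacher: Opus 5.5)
Your proof is correct and follows the paper's route: substitute $x=1$, $g_3=-g_2$ into $y^2=4x^3-g_2x-g_3$ to get $y^2=4$, then exhibit points with $\pl=1$ and $\pl'=\pm 2$. The differences are minor. You obtain the preimages from surjectivity of $\pl$ on $\C/\LA$ together with the parity of $\pl$ and $\pl'$, and you place them on $\R$ via $p(1)=4>0\Rightarrow 1>e_r$, whereas the paper argues directly on $\R$ from $g_3<0\Rightarrow e_3<0$; you also make explicit the check $\bigtriangleup=g_2^2(g_2-27)<0$, which the paper leaves implicit.
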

\begin{proof}  We consider the Diophantine equation $y^2 = 4 x^3 - g_2 x - g_3$, and make the substitutions $x=1, g_3=-g_2$.  Then $y^2 = 4$, so $y=\pm2$.
Since $g_3 <0$, $e_3<0$ so there are two real points (in each fundamental region on $\R$) such that $\pl(t_j) = 1, j=1,2$.   If $t_1<t_2$, then
$\pl'(t_1)=-2$ and $\pl(t_2) = 2.$  \end{proof}

\begin{remark}   Integer points are typically not torsion points. They sometimes generate an infinite subgroup of $E(\Q)$.  \end{remark}

The Julia sets from Lemma \ref{lem:oppositeg2g3} vary widely for different integer values of $g_2$.  In Figure \ref{fig:rhombictoral} we show that choosing $g_2 = 26$ gives a toral band.  The reason for this is discussed for example, in \cite{hk3}.  The real critical value is located near $-2.95$, and there is an attracting fixed point
$t_0 \approx 1.39$,
which attracts the complex conjugate critical values.  This can only happen when we have an extreme horizontal rhombic lattice because all other graphs are piecewise concave up on $\R$.  When $g_2 = 26$, all 3 critical values eventually get mapped into the attracting basin of $t_0$.

\begin{figure}
\begin{center}
{\includegraphics[width=.95\textwidth]{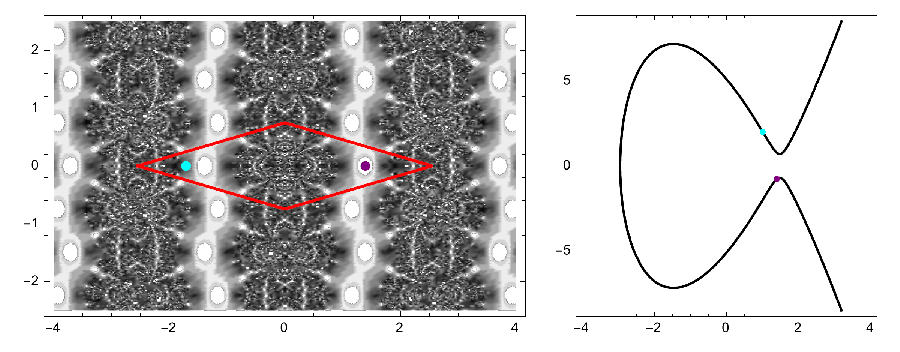}}
\caption{For $\Lambda(26,-26)$, $J(\pl)$ is on the left showing the attracting fixed point and an integer point.  At right, the associated real elliptic curve, where the attracting fixed point is shown along with
the integer point $(1,2)$ from Lemma \ref{lem:oppositeg2g3}.} \label{fig:rhombictoral}
\end{center}
 \end{figure}

\section{Dynamical remarks about some nondynamical examples}

From a paper in 2002 \cite{brown}, we see that fixing $g_2=4$, and letting $g_3 = - 4 m^2$ for
 integers $m \geq 1$, yields an interesting family of examples.
For all $m \in \N$ we have a horizontal extreme rhombic lattice, so the dynamics of $\pl$ when $g_2=4$ and $g_3 =- 4 m^2$ has been studied in (\cite{hawk26} and the references contained therein). For example, $J(\pl)$ is not a Cantor set, but might have a toral band. The arithmetic properties of equations of the form
 \begin{equation}\label{eqn:brown} y^2 = x^3 - x + m^2 \end{equation} are thoroughly studied in \cite{brown}, where is shown that each $m$ yields many integer points, but the torsion group is trivial.

 To illustrate, the cubic Diophantine equation in (\ref{eqn:brown}) using $m=2$
 corresponds to a horizontal extreme rhombic lattice with invariants $g_2 =4, g_3 = -16$.   By inspection we can ``guess'' a few integer solutions to the (\ref{eqn:brown}), namely
 $ \ds (x,y) = (0, \pm 2), (1, \pm 2), (-1, \pm 2), (4, \pm 8).$ Since $g_3<0$, the real critical value $e_3< 0.$
 The pair $(0, \pm 2)$ corresponds to two points, $t_1,t_2$ in one real fundamental period line segment of the (approximate) form $I=[-2 c_3, 0]$ since $e_3 \in I$ and also there is an attracting fixed point $p \approx -1.78 \in I$ as well.  In $I$, we find $t_1,t_2$ so that  $\pl(t_1) =0$ and $\pl'(t_1) = 4$, and   $\pl(t_2) =0$ and $\pl'(t_2) = -4$, as shown in Figure \ref{fig:special16}.  Both $t_2 \approx -2.31$ and $s_2=-t_1 \in [t_2]$ work.
  Figure \ref{fig:special16} shows the elliptic curve and $J(\pl)$ for $\LA(4, -16)$.  We mark (in cyan) the points $A_1,A_2=(-1,\pm 4),A_3,A_4=(0, \pm 4),A_5,A_6= (1,\pm 4)$ and the attracting real fixed point $t_0 \approx -1.78$, with $\pl'(t_0) \approx -.73$.  Applying formula  (\ref{eqn:double}), we see that  $2 A_j \in \Q$ but is not an integer; however, numerically, it appears that $A_j \in \widehat{F}(E_{(4,-16)}(\Q))$ for all $j$.

\begin{figure}
\begin{center}
{\includegraphics[width=.95\textwidth]{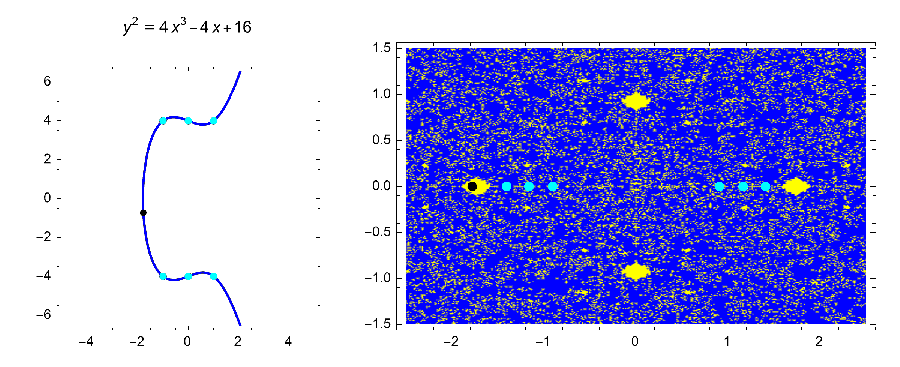}}
\caption{On the left we have the elliptic curve for $\LA(4,-16)$ and whose equation is shown. On the right the Julia set is blue, and the Fatou set is yellow.  In both graphics the attracting fixed point and some integer points are shown in cyan.} \label{fig:special16}
\end{center}
 \end{figure}

A computer was used to generate the next example that was already known to exist.
We present it as a lattice with the property that dividing the real line segment into 7 equal size pieces yields a torsion group that contains $\Z_7$.
 Setting $(g_2,g_3)= (172,664)$, the lattice associated to these invariants is an extreme horizontal rhombic lattice as shown in Figure \ref{fig:lattice7}, and the points that generate the integer torsion group are evenly spaced at intervals of $1/7$ along the segment of length $\approx  2.17338$.
 The acute angle that the two rhombus sides make is approximately (but likely not exactly) $\pi/4$.
 As we move from left to right along the points in Figure \ref{fig:lattice7}, and evaluate
 $\pl$ at each of these 7 points we obtain these values:  $\pl(t)= \{\infty, 11, 3, -5, -5, 3, 11\}.$  The value $\infty$ occurs at the origin and corresponds to the identity element  $\orb$.  The graphs of $\pl$ and $\pl'$, along with a few horizontal lines showing the integer values that occur, are shown in Figure \ref{fig:lattice7}.
 The corresponding values of $\pl'$ on the same set of points
  $ \ds \pl'(t)= \{\infty,-64, -16, -32, 32, 16, 64\}.$
This yields a set of 7 points, including the point at the pole at $0$ that gives $\orb$, that satisfy the cubic equation
  $ \ds y^2 = 4 x^3 -172 x + 664,$  (or it can be divided by 4 to obtain a monic  equation).

Because $g_2$ and $g_3$ are large, drawing $J(\pl)$ is difficult.  Estimates show the real critical value, $e_3$
is negative and large, $e_3 \approx -8$, so an attracting cycle would have at least one point close to $-8$.
While an attracting fixed point can be ruled out, there could be a cycle with a large period
attracting $e_3$.  The non-pole integer points occur in symmetric pairs about the (real) critical point $c_3$, so under iteration of $\pl$ will collapse
to at most 3 orbits in $\R$.  Further connections are the subject of future work by the author.

\begin{figure}
\begin{center}
{\includegraphics[width= .95 \textwidth]{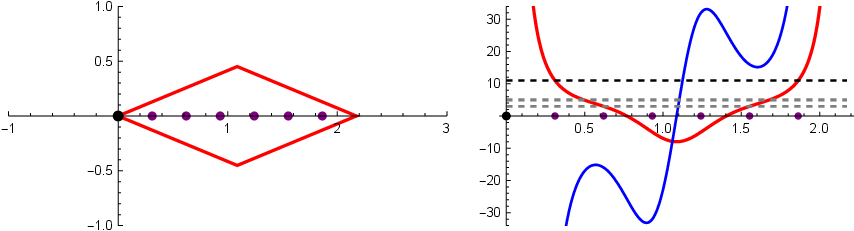}}
\caption{ Left, a fundamental region for the lattice
$\LA(172,664)$ with real points marked that have integer values for $\pl$ and $\pl'$.
Right, the graph of $\pl(t)=s$ in one periodic  interval in red, with $\pl'(t)=s$ in blue and the lines $s=3,5,11$.
Torsion points are marked in purple, with the identity located in black at $0$.}
\label{fig:lattice7}
\end{center}
\end{figure}

\bibliographystyle{amsplain}

\end{document}